\documentclass[12pt,A4paper]{article}

\usepackage[left=1in, right=1in, top=1in, bottom=30mm]{geometry}

\usepackage{epsfig}
\usepackage{epstopdf}
\usepackage{soul}
\usepackage{xfrac}
\usepackage{amsmath}
\usepackage{enumerate}

\usepackage[percent]{overpic}

\usepackage{color}
\usepackage{amsthm,amsmath,amssymb,mathrsfs}
\usepackage{booktabs}
\usepackage{mathpazo}
\usepackage{microtype}
\usepackage{overpic}
\usepackage{bm}
\usepackage{sectsty}
\usepackage[
	pdftitle={PDFTitle},
	pdfauthor={Hugo Parlier},
	ocgcolorlinks,
	linkcolor=linkred,
	citecolor=linkred,
	urlcolor=linkblue]
{hyperref}

\usepackage{tikz}
\usetikzlibrary{calc,decorations.pathreplacing}

\definecolor{linkred}{RGB}{112, 11, 75} 
\definecolor{linkblue}{RGB}{16, 78, 139}

\usepackage[hang,flushmargin]{footmisc}

\usepackage{titlesec}
	\titlespacing{\section}{0pt}{12pt}{0pt}
	\titlespacing{\subsection}{0pt}{6pt}{0pt}
	
\titlelabel{\thetitle.\quad}

\makeatother 

\theoremstyle{plain}
\newtheorem{theorem}{Theorem}[section]
\newtheorem{proposition}[theorem]{Proposition}
\newtheorem{lemma}[theorem]{Lemma}
\newtheorem{corollary}[theorem]{Corollary}

\theoremstyle{definition}

\newcommand{\sys}{{\rm sys}}

\newcommand{\Hyp}{{\mathbb H}}
\newcommand{\N}{{\mathbb N}}

\newcommand{\area}{{\rm area}}
\newcommand{\arcsinh}{{\,\rm arcsinh}}
\newcommand{\arccosh}{{\,\rm arccosh}}

\newcommand{\diam}{{\rm diam}}

\newcommand{\inv}{{\rm inv}}

\newcommand{\curves}{\mathscr C}
\newcommand{\multi}{\mathscr{C^{\times}}}

\sectionfont{\large \bfseries}
\subsectionfont{\normalsize}

\long\def\symbolfootnote[#1]#2{\begingroup%
\def\thefootnote{\fnsymbol{footnote}}\footnote[#1]{#2}\endgroup}

\def\blfootnote{\xdef\@thefnmark{}\@footnotetext}

\usepackage{mathtools}

\usepackage[sort,nocompress]{cite}

\begin{document}

{\Large \bfseries Counting curves with bounded intersection}

{\large Hugo Parlier\symbolfootnote[1]{\small Supported by ANR-SNF Grant number 200021E\_238147
(SUGAR).\\
{\em 2020 Mathematics Subject Classification:} Primary: 32G15, 37C35, 57K20 Secondary: 30F45, 37D40, 53C22.\\
{\em Key words and phrases:}
closed geodesics, hyperbolic surfaces, intersection, entropy.}
}

\vspace{0.5cm}
{\bf Abstract.}
This paper counts curves, and multicurves, on hyperbolic surfaces, providing explicit upper bounds that depend polynomially on length and exponentially in terms of an upper bound on self-intersection.

\vspace{0.5cm}

\section{Introduction}\label{sec:intro}

Curve counting, in its different forms, is a popular sport in geometric topology. In light of how they link geometric and dynamical points of view, counting closed geodesics on hyperbolic surfaces is particularly beloved. Two points of view are notable for their contrasting interpretations. The first is to consider all (primitive) closed geodesics. For closed and geodesically complete finite area surfaces, closed geodesics are dense in the unit tangent bundle, and by a theorem of Huber \cite{Huber}, as $L$ grows, the number of geodesics of length at most $L$ is asymptotic to $e^L/L$. In contrast, Birman and Series \cite{Birman-Series85} showed that restricting to simple closed geodesics, that is those without self-intersection points, paints a completely different picture. They are nowhere dense, form a Hausdorff dimension $1$ subset of the surface and only grow polynomially in length. Mirzakhani \cite{Mirzakhani2008} famously proved that the growth function of simple closed geodesics was in fact asymptotic to a polynomial of degree the dimension of the underlying moduli space, with a leading term that depends on the geometry of the surface. So closed geodesics are everywhere, whereas simple ones are special. Understanding closed geodesics is key to understanding the geodesic flow whereas understanding simple ones is intimately related to growth in the mapping class group, the group of self-homeomorphisms of the surface up to isotopy, a group of great importance.

A natural way to interpolate between these two extremes is to index subsets of closed geodesics by their number of self-intersections $K$. For fixed $K$, the results of Birman and Series and Mirzakhani continue to hold. Mirzakhani identified a fascinating phenomenon: the number of closed geodesics with at most $K$ self-intersections grows at the same rate as the simple ones, only with a additional leading multiplicative factor that only depends on $K$, and not on the surface $X$. In other directions, Sapir \cite{Sapir2021} considered sets of all geodesics with at most $K$ self-intersection points where $K$ grows as a function of $L$, showing that the non-density and Hausdorff dimension $1$ phenomenon persists far beyond fixed intersection. To do so, she proved bounds on the number of closed geodesics, building on previous work in \cite{Sapir2016}. 

A particularly interesting case is when the number of self-intersections is bounded by $\varepsilon L^2$. This comes from the fact that a closed geodesic chosen uniformly at random among all those of length at most $L$ on a surface $X$ will typically have on the order of $L^2$ self-intersection points. More precisely, Lalley \cite{Lalley} showed that most such geodesics have roughly $\kappa L^2$ self-intersection points where $\kappa$ just depends on the topology of $X$ (see Section \ref{sec:corollaries}). Torkaman \cite{Torkaman} considered this point of view to analyze the behavior of the number of such geodesics for small $\varepsilon$, namely showing that as $\varepsilon$ goes to $0$, this number loses its exponential growth. This can be expressed in terms of a type of entropy. The entropy of a set of geodesics, indexed by length, is the limit, in terms of length, of the quotient of the logarithm of the cardinality of the set divided by the length. Torkaman shows that the set of geodesics of length at most $L$ and with self-intersection at most $\varepsilon L^2$ has entropy tending to $0$ as $\varepsilon$ tends to $0$ by showing that this limit is bounded above by $C_X \sqrt{\varepsilon} \log \frac{1}{\varepsilon}$ where $C_X$ is a function that depends on the surface $X$. 

The goal of this paper is to show a related general and unifying result on the number of closed geodesics of length at most $L$ and self-intersection at most $K$. The main theorem is an upper bound on the number $N_X(L,K)$ of primitive geodesic multicurves with these properties. A precise definition of what is meant by a primitive multicurve is given in the preliminary section, but in particular, this number bounds all primitive closed geodesics. 

The first result is when the surface has boundary and the bound depends on its systole $\sys(X)$, the length of the shortest closed geodesic of $X$.  

\begin{theorem}\label{thm:boundaryintro}
Let $X$ be a hyperbolic surface with geodesic boundary. Then, for any $L>0$, we have
\[
N_X(L,K) \leq 
\left(\frac{e^{4\pi |\chi|/\sys(X)}}{2\,\sys(X)}\,L+4\right)^{12 |\chi|} e^{2\pi\sqrt{ |\chi| K}}
\]
where $\chi = \chi(X)$ is the Euler characteristic of $X$. 
\end{theorem}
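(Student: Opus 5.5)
The plan is to encode each primitive geodesic multicurve by its intersection pattern with a fixed system of arcs. Since $X$ has geodesic boundary, I would cut it into hyperbolic pieces along a maximal family of simple orthogeodesic arcs between boundary components. This gives an ideal-type decomposition of $X$ into right-angled hexagons, and it uses $3|\chi|$ arcs $a_1,\dots,a_{3|\chi|}$. Every closed geodesic multicurve $\gamma$ is then determined, up to homotopy, by its cyclic sequence of crossings with the arcs. That sequence is in turn encoded by the intersection numbers $n_i = i(\gamma,a_i)$ together with \emph{twisting/gluing data} describing how the strands pass through each hexagon. Once $K$ is fixed, this data has to be finite and controlled.

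The first step is to bound the $n_i$ in terms of $L$. I would choose the arcs short: a collar-type or area argument (area $=2\pi|\chi|$) shows that each hexagon, or each arc, has length at most something like $4\pi|\chi|/\sys(X)$ in a suitable sense. Dually, consecutive crossings of $\gamma$ with the arc system are separated by length bounded below by a quantity comparable to $2\,\sys(X)e^{-4\pi|\chi|/\sys(X)}$. This can be seen by thinking of hexagons whose opposite sides are at distance controlled via the systole and the boundary lengths. It gives $\sum_i n_i \le \frac{e^{4\pi|\chi|/\sys(X)}}{2\,\sys(X)}L$. The number of choices of the vector $(n_1,\dots,n_{3|\chi|})$ is then at most $\left(\frac{e^{4\pi|\chi|/\sys(X)}}{2\sys(X)}L+4\right)^{3|\chi|}$, roughly.

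The second step is to count the gluing data. In each hexagon the strands entering through the three arc-sides must be matched. For simple multicurves this matching is determined by the $n_i$, exactly as with Dehn–Thurston or normal coordinates. With self-intersection, the pattern in each hexagon is a set of arcs between sides, and it is determined by how many arcs go between each pair of sides, plus a permutation recording crossings. I would bound the number of such permutations with total crossing number at most $K$ by counting partitions: a crossing configuration of strands between two sides, up to the relevant isotopy, corresponds to a sequence of integer partitions whose sizes sum to at most $K$. The Hardy–Ramanujan bound $p(n)\le e^{\pi\sqrt{2n/3}}$, combined over at most $O(|\chi|)$ slots with $\sum k_j\le K$ via Cauchy–Schwarz, gives $e^{\pi\sqrt{c|\chi|K}}$ with $c$ small enough to fit under $2\pi\sqrt{|\chi|K}$. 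The extra polynomial factors — choosing how the $n_i$ split into arcs between side pairs (three numbers per hexagon, $4|\chi|$ hexagons... adjusted to total $12|\chi|$ exponent) — are absorbed into the exponent $12|\chi|$ in the length term.

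The main obstacle is the length-to-intersection inequality in the first step, with the exact explicit constant $e^{4\pi|\chi|/\sys}/(2\sys)$. I would first try to prove it by choosing the orthogeodesic arcs to be shortest ones. I would then argue that any geodesic subarc of $\gamma$ between two crossings has length at least the width of a collar around the shortest orthogeodesic. The width is bounded below using $\sinh(w)\sinh(\ell/2)\ge1$-type collar identities, with $\ell\le 4\pi|\chi|/\sys$ coming from an area comparison of an embedded neighborhood of the boundary. A secondary difficulty is making sure the partition encoding is injective on primitive multicurves. Here I would use that geodesic representatives realize minimal position, so the crossing data is canonical.
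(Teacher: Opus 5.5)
Your overall architecture (arcs dual to a trivalent spine, polynomial data in $L$ times partition data in $K$) has the right shape. However, two steps fail, and the second is the heart of the theorem.

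\textbf{Step 1 is false as stated.} Take a pair of pants whose three boundary geodesics have length $s$, so $\sys=s$, and cut it along its three seams (its shortest orthogeodesics). The figure-eight geodesic satisfies $\cosh(\ell/2)=2\cosh^2(s/2)+\cosh(s/2)$, so $\ell\approx 2s$, yet it crosses the seams four times. Your inequality allows at most about $e^{4\pi/s}\frac{\ell}{2s}<4$ crossings once $s\geq 10$. The reason is that two seam sides of a right-angled hexagon are at distance equal to the boundary side between them (here $s/2$), so there is no per-crossing gap of order $\sys(X)$.

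The paper never uses a per-crossing bound. It pushes geodesics onto the cut-locus spine $G_X$ at cost $\cosh w(X)<e^{4\pi|\chi|/\sys(X)}$. It then argues by pigeonhole on the $2|\chi|$ vertices: any $2|\chi|$ consecutive edges of a reduced path contain a cycle, of length at least $\sys(X)$ since the girth of $G_X$ is at least the systole. This gives combinatorial length at most $\frac{2|\chi|\cosh w(X)}{\sys(X)}L+2|\chi|$. The constants $\frac12$ and $4$ in the statement come from inserting this into $\binom{L_c+12|\chi|-1}{12|\chi|-1}\le\bigl(\frac{e}{12|\chi|-1}\bigr)^{12|\chi|}(L_c+12|\chi|-1)^{12|\chi|}$, where $12|\chi|$ counts the segments on the $6|\chi|$ oriented edges, split by their two possible predecessors.

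\textbf{Step 2 does not separate $L$ from $K$.} Strands crossing a hexagon between the same two seam sides are matched by a permutation whose inversions are their crossings. Permutations are not counted by partitions: any inversion table with entries in $\{0,1\}$ is valid, so at least $\binom{n-1}{k}$ permutations of $n$ strands have exactly $k$ inversions. With $n$ of order $L$, your count of gluing data grows like a polynomial in $L$ of degree $K$. For $K>12|\chi|$ and large $L$ this exceeds the bound you are trying to prove.

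Partitions appear only when letters are interchangeable. For a binary word with known letter counts, the numbers of $0$s to the right of each $1$ form a nonincreasing sequence, i.e.\ a partition of the number of inversions, and this sequence determines the word. The missing idea is an encoding in which everything left over is such a binary word. The paper orders the segments on each oriented edge of the spine by their forward endpoints in the universal cover, not by geometric position on an arc. Segments passing to the same successor are then matched in order and never permuted, and each segment records only whether its predecessor lies to its left or right. These words determine the multicurve, because the segments with a given successor form a consecutive block matched in order with the next word. Each inversion is charged to a distinct self-intersection via the first common edge of two crossing lifts, so the total number of inversions is at most $K$.

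Your final partition count also does not close. Hardy--Ramanujan plus Cauchy--Schwarz over $m$ slots gives $e^{\pi\sqrt{2mK/3}}$, which already equals $e^{2\pi\sqrt{|\chi|K}}$ for $m=6|\chi|$. That leaves no room for the $\binom{K+m}{m}$ ways of distributing $K$ among the slots, nor for the extra slots your hexagon encoding uses. The paper instead bounds the number of $6|\chi|$-tuples of partitions of total size at most $K$ by $e^{aK}\prod_{n\ge1}(1-e^{-an})^{-6|\chi|}$, which handles the distribution automatically, and then optimizes $a$.
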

A main feature is that this bound holds for all $L>0$, whereas an asymptotic result only holds for, often mysterious, large enough $L>0$.

Here is the rough idea of the proof. We first push all curves to a geometric spine without distorting length too much, using an idea from \cite{BasmajianParlier2026}. When pushed to a spine, when you forget how parts of the curve are pasted together and in which order they appear, a multicurve now looks like a simple multicurve, the number of which can be bounded by a polynomial of the length. Additional data, such as the orientation of pieces of the curve on the spine, can also be bounded by a polynomial function of length. The key to the above inequality is to now encode the rest of the data in a way that the remaining necessary data to recover the original multicurve only depends on the upper bound on the self-intersection number $K$. This is done via a type of encoding which associates binary words to the edges of the spine, and the rest of the count of words can be bounded using classical bounds on the number of multi partitions. The idea of using a word encoding for curves dates back to Poincar\'e and Dehn, but is particularly effective in the work of Birman-Series \cite{Birman-Series85}, Cohen-Lustig \cite{CohenLustig}, Chas \cite{Chas2004} and Chas-Lalley \cite{ChasLalley}. The encoding used here is also similar to one used in Torkaman's bounds \cite{Torkaman}. What makes the bound more effective is a clear separation between what data can be bounded by a polynomial, and which absorbs all of the length, and the exponential part, which only depends on the topology and the self-intersection number. This separation of length and intersection is reminiscent of what is called "fixed parameter tractable" for algorithms. 

All of this is possible because of the somewhat simple graph structure of the geometric spine, and there is no such spine for closed surfaces. To tackle this problem, the approach is quite different: we construct a surface with a single boundary curve and an injective map from all curves of the original closed surface to the surface with boundary. This map is done in a geometric way such that the lengths are not increased by too much, and such that the surface with boundary retains some of the geometric properties of the original surface, namely a related bound on the systole. We can then apply the first theorem to obtain the following result for closed surfaces.

\begin{theorem}\label{thm:closedintro}
Let $X$ be a closed hyperbolic surface. Then, for any $L>0$,
\[
N_X(L,K) \leq
\left(
\frac{5}{\left(s'_0\right)^2}
e^{\frac{16q+4\pi(|\chi|+1)}{s'_0}}\,L+4
\right)^{12(|\chi|+1)}
e^{2\pi\sqrt{(|\chi|+1)K}}.
\]
where $\chi= \chi(X)$, $r_0 = \frac{1}{2} \arcsinh(2/\sqrt{3})$ and $s'_0=\min\{\sys(X), 2\pi\sinh r_0\}$.
\end{theorem}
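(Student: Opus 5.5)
We deduce Theorem~\ref{thm:closedintro} from Theorem~\ref{thm:boundaryintro}, following the strategy described in the introduction. We build a hyperbolic surface $Y$ with a single geodesic boundary component and $|\chi(Y)| = |\chi|+1$, together with an injective map $\Phi$ from primitive geodesic multicurves on $X$ to primitive geodesic multicurves on $Y$. The map should satisfy three conditions: it does not increase self-intersection, it increases length by at most a controlled multiplicative factor, and $\sys(Y)$ is bounded below in terms of $s'_0$. Then
\[
N_X(L,K) \leq N_Y(\lambda L, K),
\]
and substituting into Theorem~\ref{thm:boundaryintro} with $|\chi(Y)|=|\chi|+1$ gives the exponent $12(|\chi|+1)$ and the factor $e^{2\pi\sqrt{(|\chi|+1)K}}$ directly. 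It remains to match the polynomial coefficient.

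\textbf{Step 1: remove a disk.} Pick a point $p\in X$ and a radius $r_0$ such that the disk $B(p,r_0)$ is embedded. The value $r_0=\tfrac12\arcsinh(2/\sqrt3)$ is a universal lower bound on the maximal injectivity radius of closed hyperbolic surfaces, which is why it appears in the statement. Removing the open disk $B(p,r_0)$ gives a surface $X'$ with one boundary circle of length $2\pi\sinh r_0$ and Euler characteristic $\chi-1$. Every closed geodesic on $X$ can be homotoped off the disk. If a geodesic arc crosses $B(p,r_0)$, replace it by the shorter boundary arc of $\partial B$. This changes the length of each crossing by at most a bounded factor and creates no new intersections, provided we push consistently to one side, for instance always around the same half. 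The map is injective because $X'\hookrightarrow X$ induces a map on homotopy classes, and we choose a consistent section of it.

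\textbf{Step 2: make the surface hyperbolic with geodesic boundary.} $X'$ is not hyperbolic with geodesic boundary, so we take $Y$ to be the hyperbolic structure in which the boundary is a geodesic of length comparable to $2\pi\sinh r_0$. Concretely, we can use the curvature-controlled comparison of \cite{BasmajianParlier2026}-type arguments, or a Lipschitz map obtained by grafting a collar. The systole of $Y$ is then at least a constant multiple of $s'_0=\min\{\sys(X),2\pi\sinh r_0\}$.

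\textbf{Step 3: bound the length distortion.} Curves on $X$ passing near $p$ must be rerouted. A geodesic segment of length $\ell$ can cross the disk about $\ell/r_0$ times, and each crossing costs at most $\pi\sinh r_0$. This gives a multiplicative distortion $\lambda$, which together with the quasi-isometry constant from Step 2 should produce the $5/(s_0')^2$ factor and the extra $16q/s'_0$ in the exponent. Plugging $\sys(Y)\geq s'_0$ into $e^{4\pi|\chi(Y)|/\sys(Y)}/(2\sys(Y))$ gives the remaining exponent term $4\pi(|\chi|+1)/s'_0$.

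\textbf{Main obstacle.} The hardest step is making $\Phi$ simultaneously injective and non-increasing on intersections while keeping the length distortion and the systole bound explicit. Rerouting around the removed disk can make distinct curves homotopic in $X'$, and it can introduce bigons. My first attempt would be to define $\Phi$ via lifts in the universal cover: each geodesic on $X$ is pushed off the $\pi_1$-orbit of lifted disks, always onto the same side. Since disjoint lifts are pushed coherently, no new crossings appear. Injectivity then follows because $X'\to X$ induces a surjection on $\pi_1$ that admits this consistent choice.
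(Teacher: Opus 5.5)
Your overall architecture is the paper's. You punch a hole at the centre $p$ of an embedded disk and produce a hyperbolic surface with one geodesic boundary component and $|\chi|+1$ in place of $|\chi|$. You define $\rho$ as a section of the forgetful map $\Phi$, so that injectivity follows from $\Phi\circ\rho=\mathrm{id}$, check that self-intersection does not increase, and apply Theorem~\ref{thm:boundaryintro}. Step~1 is essentially right. The paper uses Yamada's radius $r_1=\arcsinh(2/\sqrt3)$ and removes the disk of radius $r_0=r_1/2$, so that the buffer annulus gives the clean detour factor $6/5$ of Lemma~\ref{lem:disk-detour}. The genuine gap is Step~2, which is where all the work, and all the dependence on $\sys(X)$, lies.

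Since $\partial X'$ is a circle of geodesic curvature $\coth r_0$, Theorem~\ref{thm:boundaryintro} does not apply to $X'$. You need a genuinely new hyperbolic metric $Y$ with geodesic boundary, together with two comparisons in opposite directions: $\ell_Y(\rho(\gamma))\le\lambda\,\ell_X(\gamma)$ with explicit $\lambda$, and $\sys(Y)\ge s'_0$, which must also control the boundary geodesic. None of your suggestions supplies this:
\begin{itemize}
\item there is no canonical ``hyperbolic structure in which the boundary is a geodesic'';
\item Lemma~\ref{lem:spine} from \cite{BasmajianParlier2026} presupposes geodesic boundary;
\item no hyperbolic collar can be glued smoothly onto a circle of curvature $\coth r_0>1$, since curves equidistant to a geodesic have curvature $<1$.
\end{itemize}
So Step~3 has nothing to compute with. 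The Step~1 rerouting costs only a universal factor, so the terms $5/(s'_0)^2$ and $e^{16q/s'_0}$ are asserted rather than derived. Moreover, a bound $\sys(Y)\ge c\,s'_0$ with $c<1$ would give $e^{4\pi(q+1)/(c s'_0)}$, not the stated exponent.

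The missing ingredient in the paper is an explicit construction.
\begin{enumerate}
\item Take the Delaunay triangulation with single vertex $p$. By a packing bound on circumradii (Lemma~\ref{lem:delaunay-angles}), its angles are at least $\theta_0\ge 2s_0e^{-16q/s_0}$.
\item Cut off the corners at radius $r_0$ and replace each truncated triangle by the right-angled hexagon with the same three remaining side lengths. Gluing these gives $X_1$, whose boundary is geodesic because all hexagon corners are right angles.
\item Lemma~\ref{lem:truncate} bounds the distortion by $\frac{2r_0+\coth r_1}{r_0\sin(\theta_0/2)}$. This gives $\ell_{X_1}(\rho(\gamma))\le\frac{10}{s_0}e^{16q/s_0}\ell_X(\gamma)$, which is the source of the extra $1/s'_0$ and of $e^{16q/s'_0}$.
\item Conversely, Lemma~\ref{lem:systole} shows that hexagon distances dominate truncated-triangle distances, so $\ell_{X_1}(\gamma)\ge\ell_X(\Phi(\gamma))$. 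It also shows $\ell(\partial X_1)\ge 2\pi\sinh r_0$, hence $\sys(X_1)\ge s'_0$ exactly.
\end{enumerate}
Feeding this into Theorem~\ref{thm:boundaryintro} for $q+1$ gives the stated bound, since $\frac{10}{s_0}\cdot\frac{1}{2s'_0}\le\frac{5}{(s'_0)^2}$. Without a construction of this kind, with explicit control in both directions, your argument does not reach the theorem.
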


The choice of constant $r_0 \approx 0.5$ is somewhat arbitrary, and is mostly an artifact of the proof. The point is that, like for Theorem \ref{thm:boundaryintro}, when the systole is very small, the coefficients in the polynomial grow, but otherwise they are also uniformly bounded.

These results can then be applied to Torkaman's problem mentioned above, simply by replacing $K$ by $\varepsilon L^2$. Setting $h_X(\varepsilon)$ to be the entropy of multicurves of length at most $L$ with at most $\varepsilon L^2$ self-intersections, we obtain the following. 

\begin{corollary}\label{cor:epsilonintro}
For $X$ with geodesic boundary, 
\[
h_X(\varepsilon)\leq 2\pi\sqrt{ |\chi| \varepsilon}
\]
and, for closed $X$,
\[
h_X(\varepsilon)\leq 2\pi\sqrt{( |\chi| +1) \varepsilon}
\]
where $\chi= \chi (X)$.
\end{corollary}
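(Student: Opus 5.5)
The plan is to substitute $K=\varepsilon L^2$ into Theorem \ref{thm:boundaryintro} (resp. Theorem \ref{thm:closedintro}), take logarithms, divide by $L$, and pass to the limit. Following the introduction, the entropy is
\[
h_X(\varepsilon)=\limsup_{L\to\infty}\frac{\log N_X(L,\varepsilon L^2)}{L}.
\]
If the paper defines it as a limit, the upper bound for the $\limsup$ suffices.

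For $X$ with geodesic boundary, Theorem \ref{thm:boundaryintro} holds for every $L>0$. Write
\[
A=\frac{e^{4\pi|\chi|/\sys(X)}}{2\,\sys(X)}.
\]
This constant depends only on $X$. The theorem then gives
\[
\log N_X(L,\varepsilon L^2)\leq 12|\chi|\log(AL+4)+2\pi\sqrt{|\chi|\varepsilon L^2}.
\]
The second term equals $2\pi\sqrt{|\chi|\varepsilon}\,L$, since $L>0$. Dividing by $L$, the first term is $12|\chi|\log(AL+4)/L$, which tends to $0$ as $L\to\infty$ because it is logarithmic over linear. Hence $h_X(\varepsilon)\leq 2\pi\sqrt{|\chi|\varepsilon}$.

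The closed case is identical using Theorem \ref{thm:closedintro}. The polynomial coefficient $\frac{5}{(s'_0)^2}e^{(16q+4\pi(|\chi|+1))/s'_0}$ is again a constant depending only on $X$. The exponent $12(|\chi|+1)$ is fixed, and the exponential factor becomes $e^{2\pi\sqrt{(|\chi|+1)\varepsilon}\,L}$. Dividing the logarithm by $L$ and letting $L\to\infty$ gives $2\pi\sqrt{(|\chi|+1)\varepsilon}$.

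There is no real obstacle here. The only points to check are that all constants are independent of $L$, and that $\sqrt{\varepsilon L^2}=\sqrt{\varepsilon}\,L$. The latter holds because $L>0$, and it is what makes the exponential part linear in $L$ while the polynomial part contributes nothing to the entropy.
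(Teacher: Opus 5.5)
Your proposal is correct and is exactly the paper's argument, which the paper treats as immediate: substitute $K=\varepsilon L^2$ into Theorems \ref{thm:boundaryintro} and \ref{thm:closedintro}, take logarithms, and divide by $L$, so that the polynomial factor contributes only $O(\log L/L)\to 0$ and the exponential factor contributes $2\pi\sqrt{|\chi|\varepsilon}$ (resp.\ $2\pi\sqrt{(|\chi|+1)\varepsilon}$). One point you leave implicit: $\varepsilon L^2$ need not be an integer, but this is harmless, since $N_X(L,K)=N_X(L,\lfloor K\rfloor)$ and the bound is nondecreasing in $K$.
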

In particular, $\limsup_{\varepsilon \to 0^+} \frac{h_X(\varepsilon)}{\sqrt{\varepsilon}} $ is bounded above by a universal constant. The $\liminf$ of the same quantity can also be bounded, this time from below by  $\pi\sqrt{2|\chi|}$ (see Proposition \ref{prop:lower}), leaving a, slightly irritating, multiplicative discrepancy of $\sqrt{2}$. The proof is a direct consequence of Lalley's theorem. This raises the question of whether the actual limit exists and, if so, whether it is equal to the lower bound given by Proposition \ref{prop:lower}. 

\noindent {\bf Acknowledgements.}

This work benefited greatly from many conversations about some of the ideas and results in earlier stages. In particular I thank Marie Abadie, Nalini Anantharaman, Ara Basmajian, David Fisac, Alfredo Hubard, Francis Lazarus, Arnaud de Mesmay and Jenya Sapir. 

\noindent {\bf Statement on the use of AI and the origin of ideas.}

While preparing the manuscript, certain standard AI tools were used for formatting, finding references and checking certain computations. It happens that AI sometimes provides more information than requested which may or may not have led to new ideas and/or extra confusion. 

Statements such as these do suggest thinking about where the ideas actually did come from. For this particular paper, there are certainly multiple origins. On the one hand, with Ara Basmajian, we recently proved a lemma in \cite{BasmajianParlier2026} which seemed useful in other contexts (this is Lemma \ref{lem:spine} below). It suggests pushing the geometry of a surface with boundary to a spine, and then working combinatorially in a spine. The research into the background on entropy as a measure of growth of geodesics, led to Torkaman's work on the entropy of currents \cite{Torkaman}. The methods of proof, and questions raised, reminded me of the work of Sapir \cite{Sapir2016,Sapir2021} who had this great idea of considering sets of geodesics with growing intersection. Ultimately, proving the counting question for curves with bounded intersection led to the more general results on $K$-curves, namely influenced by a paper by Aougab and Souto \cite{AougabSouto} who have a great conjecture on the number of topological types of $K$-curves, and the results of Mirzakhani \cite{MirzakhaniOrbits}, further pushed by Erlandsson and Souto \cite{ErlandssonSoutoCounting, ErlandssonSoutoBook}, which imply that the growth of $K$-curves is really just the same as the growth of simple curves times a topological constant. This suggests a type of separation of variables phenomenon one might hope for in an upper bound on all curves with at most $K$ self-intersections. 

\section{Setup and preliminary results}\label{sec:prelim}

Throughout $\Sigma$ will be a finite type orientable surface with $\chi(\Sigma) <0$ and $X \cong \Sigma$ will designate a hyperbolic surface homeomorphic to $\Sigma$. By focussing on its Nielsen core, we can think of the surface $X$ as having genus, cusps or boundary curves which are simple closed geodesics. By Gauss-Bonnet, the area of $X$ is
\[
\area(X)= 2\pi | \chi(X) | = 2 \pi q
\]
where we use $q=| \chi(X) |$. 

When a surface $X$ has geodesic boundary, we consider two quantities: its boundary length $\ell(\partial X)$, which is the sum of lengths of its boundary geodesics, and its boundary width $w(X)$ defined as 
\[
 w(X)=\max_{x\in X} d(x,\partial X).
\]
The width can be estimated using other invariants, such as systole length and area. For instance, an easy area argument gives 
\begin{equation}\label{eq:systolewidth}
w(X) \leq \frac{1}{2 \sinh \frac{\sys(X)}{4}} \,\area(X) = \frac{\pi}{\sinh \frac{\sys(X)}{4}} q < \frac{4\pi q}{\sys(X)}.
\end{equation}
The first inequality comes by taking a distance path between a point and the boundary and observing that a strip of width $\sys(X) / 4$ is embedded, and hence its area is less than the full area of the surface. 

In fact, the same argument shows that if $X$ is closed then its diameter $\diam(X)$ satisfies the same inequality:

\begin{equation}\label{eq:diam}
\diam(X) < \frac{4\pi q}{\sys(X)}.
\end{equation}

By curve we mean a free homotopy class of oriented closed curve, which is both essentiel (so not homotopic to a point or a boundary element) and primitive (so it not the iterate of another homotopy class). The set of curves on a surface of is denoted $\curves(\cdot)$, so for instance we can consider $\curves(\Sigma)$ and $\curves(X)$. Through the uniqueness of a geodesic in a free homotopy class, curves on $\Sigma$ correspond to non-boundary closed geodesics on $X$ so the notion of the length of a curve on $X$ makes sense. The length of the (or a) shortest closed geodesic of $X$ is its systole $\sys(X)$, where this time we do take into account boundary geodesics. 

Similarly, we can consider multicurves, which we define here to be collections of distinct curves. Note they are not necessarily simple and they are not weighted. As curves are considered primitive, and they are all distinct, these are also naturally primitive. Because we are considering oriented multicurves, there is one small additional supposition: a multicurve cannot contain both a curve and the same curve with opposite orientation. We denote by $\multi(\cdot)$ the set of multicurves. As before, we can measure lengths of elements of $\multi(X)$. 

Curves and multicurves are classified according to their intersection numbers. As there is some discrepancy between notations and definitions in the literature, here is the notation used here. 

For two curves $\alpha, \beta \in \curves(\Sigma)$, we define their intersection number $i(\alpha,\beta)$ to be the minimum number of intersection points between independently chosen representatives in general position. In particular, $i(\alpha,\alpha)=2k$ for some $k\in \N$. When $k=0$, the curve is called simple. In general, $k$ is the self-intersection number of $\alpha$, so is just the quantity $\frac{1}{2} i(\alpha,\alpha)$. 

Now for multicurves $\Gamma,\Gamma' \in \multi(\Sigma)$, we set their intersection number $i(\Gamma,\Gamma')$ to be the sum of the intersection numbers of all pairs of components. And the self-intersection of a multicurve $\Gamma$ is the quantity $\frac{1}{2} i(\Gamma,\Gamma)$. So, if $\Gamma = \alpha_1 \cup \cdots \cup \alpha_m$, this number is 
\[
\frac{1}{2} i(\Gamma,\Gamma) = \sum_{k=1}^m \frac{1}{2} i(\alpha_k,\alpha_k) + \sum_{1 \leq k < l \leq m}i(\alpha_k,\alpha_l).
\]
This is coherent: an intersection point between two distinct curves of $\Gamma$ is counted twice in the quantity $i(\Gamma,\Gamma)$, as is a self-intersection point of a curve which belongs to $\Gamma$. 

Finally, we can formally define the quantities of interest. We now fix a hyperbolic surface $X$, $L>0$, and $K\in \N$. The quantity $N_X(L,K)$ is the number of geodesic multicurves of length at most $L$ and self-intersection at most $K$. 

\section{Encoding and counting closed curves on surfaces with boundary}

\subsection{Pushing curves to a combinatorial model}
As stated previously, the counting problem splits into a geometric counting problem of simple multigeodesics, the number of which are bounded by a polynomial of length, and a combinatorial counting problem which is entirely responsible for exponential growth. 

We begin by pushing a primitive multi-curve to a well-defined spine or carrier graph of our surface. We borrow the following result from \cite{BasmajianParlier2026} which finds a geometric carrier graph and quantifies how lengths of curves are distorted when pushed to the spine. 

\begin{lemma}\label{lem:spine} Let $X$ be a surface with $\partial X \neq \emptyset$ and width $w(X)$ There is a finite embedded geodesic graph $G_X\subset X$ such that every closed geodesic $\gamma$ in $X$ is freely homotopic to a closed path $\gamma'\subset G_X$ satisfying
\[
 \ell_{G_X}(\gamma')
 \leq \cosh(w(X))\,\ell_X(\gamma) < \frac{4\pi q}{\sys(X)}\, \ell_X(\gamma) 
\]
\end{lemma}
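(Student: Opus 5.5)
The plan is to prove the two inequalities separately. The first, $\ell_{G_X}(\gamma')\leq \cosh(w(X))\,\ell_X(\gamma)$, comes from a geometric construction of the spine. The second, $\cosh(w(X)) < \frac{4\pi q}{\sys(X)}$, is a purely geometric estimate on the width, which I would prove by an area argument sharper than the one giving \eqref{eq:systolewidth}.

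\emph{Construction of $G_X$ and the first inequality.} Let $G_X$ be the cut locus of $\partial X$: the set of points of $X$ with at least two distinct shortest paths to $\partial X$. Because $X$ is hyperbolic with geodesic boundary, this is a finite geodesic graph; its edges are pieces of equidistant curves, which in the hyperbolic setting are geodesic segments. The complement $X\setminus G_X$ is the image of the injective normal flow
\[
\Phi:\{(x,t): x\in\partial X,\ 0\leq t<\tau(x)\}\to X,
\]
where $\tau(x)\leq w(X)$ is the cut time. So $X\setminus G_X$ deformation retracts away from $\partial X$ onto $G_X$, along normal geodesics. In Fermi coordinates $(x,t)$ the metric is $\cosh^2 t\,dx^2+dt^2$. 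Pushing a point at height $t$ outward to the cut point at height $\tau(x)$ multiplies the $dx$-component by $\cosh\tau(x)/\cosh t\leq\cosh w(X)$ and kills the $dt$-component. The retraction is therefore $\cosh(w(X))$-Lipschitz on each normal strip. Applying it to a closed geodesic $\gamma$ gives a homotopic closed path $\gamma'\subset G_X$ with $\ell(\gamma')\leq\cosh(w(X))\,\ell_X(\gamma)$; I would then tighten $\gamma'$ inside the graph, which can only shorten it.

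\emph{The second inequality.} Here I use the cut-locus decomposition to write the area exactly:
\[
2\pi q=\area(X)=\int_{\partial X}\sinh\tau(x)\,dx .
\]
Choose $p\in G_X$ realizing $d(p,\partial X)=w(X)$, and let $x_0\in\partial X$ be the foot of a shortest path from $p$, so $\tau(x_0)=w(X)$. The key claim is that $\tau$ cannot drop quickly near $x_0$. A point $x$ with $d_{\partial X}(x,x_0)\leq s$ has $\tau(x)\geq w(X)-c$ for a constant $c$, as long as $s$ is a definite fraction of $\sys(X)$. This holds because an early cut point near $x$ would produce a short essential loop, through two normal segments and a short piece of $\partial X$, contradicting the systole. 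The candidate scale is $s=\sys(X)/4$, the same strip width used for \eqref{eq:systolewidth}. Integrating over this window gives
\[
2\pi q\geq 2s\,\sinh(w(X)-c)\approx \frac{\sys(X)}{2}\,e^{w(X)-c}.
\]
This yields $\cosh w(X)\leq C\,q/\sys(X)$, and the aim is to tune $s$ and $c$ so that $C\leq 4\pi$ with strict inequality. If $\sys(X)$ is not small compared with $q$, I would instead check the bound directly, using that $w(X)$ is then bounded by a constant via \eqref{eq:systolewidth}.

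\emph{Main obstacle.} The hard step is the lower bound on $\tau$ near $x_0$. It requires showing that distinct normal geodesics emanating close to $x_0$ cannot be cut much earlier than $w(X)$ without creating a loop of length less than $\sys(X)$. I would first try to show that $\tau$ is $1$-Lipschitz with respect to the hyperbolic distance along the equidistant curve at height $w(X)$. I expect this constant bookkeeping to be delicate, and it is where the precise factor $4\pi$ in the lemma must come from.
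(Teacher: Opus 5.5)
\textbf{The second inequality cannot be proved as printed.} Your second half targets $\cosh w(X)<\frac{4\pi q}{\sys(X)}$, and this inequality is false, so no tuning of $s$ and $c$ can work. Since $\gamma'\subset G_X\subset X$ is freely homotopic to the geodesic $\gamma$, you always have $\ell_{G_X}(\gamma')\ge \ell_X(\gamma)$, and $\cosh w(X)\ge 1$. So whenever $\sys(X)\ge 4\pi q$, both your target and the final strict inequality of the lemma fail. An example is a pair of pants with three boundary geodesics of length $\ell>4\pi$, where $q=1$ and $\sys=\ell$. Your fallback for large systole is exactly where this bites.

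The failure is not confined to that regime. Take a one-holed torus with $\ell(\partial X)=\epsilon$ and an interior simple geodesic $\alpha$ of length $\epsilon$. The disjoint collars of $\alpha$ and $\partial X$ give $w(X)\ge d(\alpha,\partial X)\ge 2\arcsinh(1/\sinh(\epsilon/2))$, hence $\cosh w(X)>2/\sinh^2(\epsilon/2)\approx 8/\epsilon^2$, whereas $4\pi q/\sys(X)=4\pi/\epsilon$.

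In that example your key claim fails too. Your own formula $2\pi q=\int_{\partial X}\sinh\tau$ forces $\{\tau\ge w-c\}$ to have length $O(e^{c}\epsilon^2)$, far less than your window $\sys/2=\epsilon/2$. The heuristic breaks because a cut point at height $\tau(x)$ only yields a loop of length at least $2\tau(x)$. That is at least twice the half-collar width of $\partial X$, so nothing contradicts the systole.

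The printed right-hand side is evidently a slip for $e^{4\pi q/\sys(X)}\,\ell_X(\gamma)$. This is the form actually used in the proof of Theorem~\ref{thm:boundary}, where the factor $e^{4\pi q/s_0}$ appears. It follows in one line from \eqref{eq:systolewidth}: $\cosh w(X)<e^{w(X)}<e^{4\pi q/\sys(X)}$. No refined area argument is needed, and none could give the printed version.

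\textbf{The Lipschitz argument for the first inequality has a gap.} Under $r(x,t)=(x,\tau(x))$, a horizontal vector $\dot x$ at height $t$ is sent to a vector of length $|\dot x|\sqrt{\cosh^2\tau(x)+\tau'(x)^2}$, not $|\dot x|\cosh\tau(x)$. The image slides along a cut edge, and that edge is in general tilted with respect to the normal foliation.

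Concretely, let $p$ be a vertex with $d(p,\partial X)=w(X)$. An edge at $p$ bisects two consecutive distance paths, making an angle $\psi<\pi/2$ with them (generically). Just beside the foot $x_0$ of one of these paths, the retraction stretches $\partial X$ by about $\cosh w(X)/\sin\psi>\cosh w(X)$. So $r$ is not $\cosh(w(X))$-Lipschitz, and a pointwise bound does not suffice.

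The paper does not prove this part itself; it imports it from \cite{BasmajianParlier2026}. There the spine, together with the distance paths from its vertices, cuts $X$ into quadrilaterals. The curve $\gamma$ is cut into the geodesic segments crossing these quadrilaterals, and each segment is compared, by hyperbolic trigonometry, with the graph path it is pushed to. You need an argument of that segment-wise kind, not a pointwise Lipschitz estimate on the normal retraction.
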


Here are some basic facts about $G_X$ and a brief idea of how you prove the above statement. The geometric spine $G_X$ is defined as the cut-locus of the distance function to $\partial X$, so edges correspond to points with exactly two distance paths to boundary and vertices to points with at least three such paths. The graph, together with the distance paths from vertices to the boundary, decompose the surface into quadrilaterals. The proof consists in cutting a curve into geodesic segments that cross the quadrilaterals and by pushing the segments to the graph in the natural way. The quantified distortion factor comes from standard hyperbolic geometric arguments. Finally, we note that the girth of $G_X$ is at least the systole of $X$, which we denote $s_0= \sys(X)$, and which we recall is the minimal length among all closed geodesics of $X$ including its boundary curves. 

\begin{figure}[h]
 \centering
\begin{overpic}[width=0.6\linewidth,
tics=10]{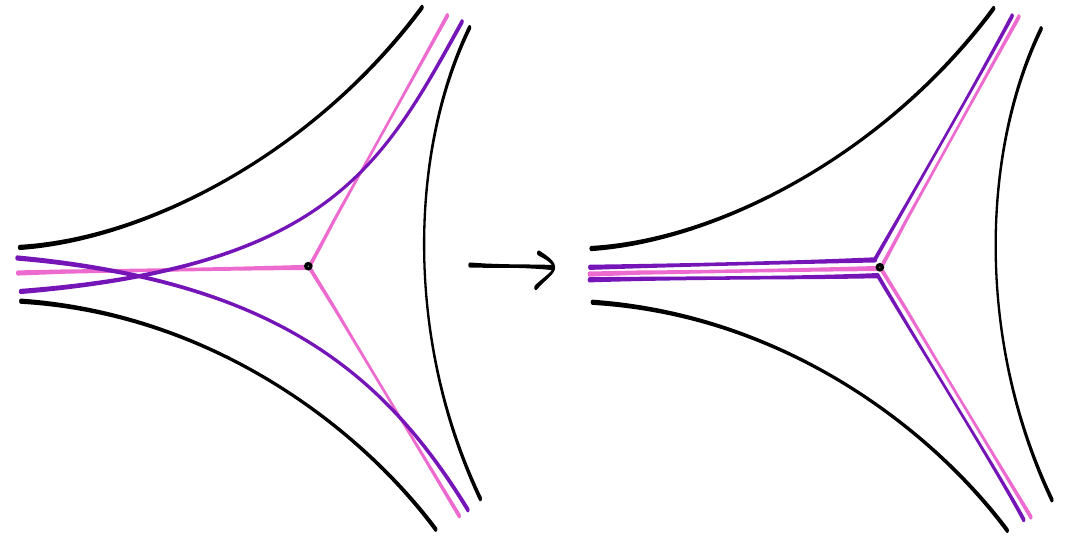}
\put(33,20){$G_X$}
\put(19,11){$\partial X$}
\end{overpic}
\caption{Pushing a curve to a carrier-graph}
 \label{fig:graph}
\end{figure}

For a generic surface, $G_X$ has all vertices of degree $3$, but can have higher degree vertices. If so, for convenience, we split a vertex of higher degree into several of degree $3$, and add edges of length $0$. There are multiple ways of doing this: in fact, if the vertex is of degree $d$, there are Catalan number $d-2$ ways of doing this, but we just choose one. (This is not strictly necessary of course, but just makes our life a bit easier.)

Now since the vertices are degree $3$, an Euler characteristic argument tells us that the number of edges is $3 |\chi(X)|= 3q$, and the number of vertices is $2q$. 

Our goal is reach a purely combinatorial setting, and $G_X$ is an intermediate step. To go one step further, we define $G$ to be the graph associated to $G_X$ where we have replaced all edges of $G_X$ by edges of length $1$.

The following lemma compares lengths of closed paths on $G$ and $G_X$. As these are the same underlying abstract graphs, a closed path that lives on $G_X$ naturally lies on $G$ and vice-versa. For a closed path $\gamma$, its combinatorial length is its $G$-length, denoted by $\ell_G(\gamma)$, while its geometric length is $\ell_{G_X}(X)$. 

\begin{lemma}\label{lem:comb}
Let $\gamma$ be a closed path on $G_X$ with $\ell_{G_X}(\gamma) \leq L$. Its combinatorial length satisfies 
\[
\ell_G(\gamma) \leq \frac{2q}{s_0} L + 2q.
\]
\end{lemma}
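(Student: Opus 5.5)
The plan is to split the closed path $\gamma$ into consecutive subpaths of controlled combinatorial length, and to show that each subpath that is not too short combinatorially must have geometric length at least $s_0$. The key input is the girth bound recalled above: every embedded cycle in $G_X$ has length at least $s_0$. Since $G_X$, and hence $G$, has $2q$ vertices, any edge path in $G$ visiting $2q+1$ vertices, that is, traversing $2q$ edges, must repeat a vertex.

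First I would deal with backtracking. A path produced by Lemma \ref{lem:spine} may be assumed reduced, with no immediate backtracking, since cancelling backtracks decreases both $\ell_G$ and $\ell_{G_X}$ and keeps the free homotopy class. So I assume $\gamma$ is a reduced closed edge path. I then write $\gamma$ as a concatenation of consecutive subpaths $\gamma_1,\dots,\gamma_m,\gamma_{m+1}$, where each $\gamma_i$ with $i\leq m$ traverses exactly $2q$ edges and $\gamma_{m+1}$ traverses fewer than $2q$ edges. This gives
\[
\ell_G(\gamma)\leq 2q\,m+2q .
\]

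Next, each $\gamma_i$ with $i\le m$ has $2q+1$ vertex visits among only $2q$ vertices, so it repeats a vertex. Take a repetition $v_a=v_b$ with $a<b$ and $b-a$ minimal. Then the subpath from $v_a$ to $v_b$ is a closed path with no other repeated vertices, hence a simple cycle in the graph. Reducedness makes it nondegenerate: it cannot be a single edge traversed back and forth. So this cycle is an embedded cycle of $G_X$, and its geometric length is at least the girth, which is at least $s_0$. Therefore $\ell_{G_X}(\gamma_i)\geq s_0$. Summing over $i$ gives $m\,s_0\leq \ell_{G_X}(\gamma)\leq L$, so $m\leq L/s_0$. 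Combining with the decomposition yields
\[
\ell_G(\gamma)\leq \frac{2q}{s_0}L+2q .
\]

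The main obstacle is the claim that the girth of $G_X$ is at least $s_0$, in the presence of the added zero-length edges. A simple cycle in $G_X$ is a closed curve in $X$. It is essential, because $G_X$ is a spine onto which $X$ retracts, so the inclusion is $\pi_1$-injective. Its geodesic representative therefore has length at least $s_0$, and it is no shorter than that representative. The zero-length edges from splitting vertices do not affect this argument, because they only lie inside cycles that correspond to essential loops. Degenerate cycles made only of zero-length edges cannot occur, since the splitting produces a tree at each vertex. With this settled, the remaining steps are routine counting.
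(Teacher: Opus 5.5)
Your proof is correct and is essentially the paper's argument: take the path reduced, cut it into consecutive blocks of $2q$ edges, and apply pigeonhole on the $2q$ vertices to get a closed subpath of $G_X$-length at least $s_0$ in each full block, so $m\le L/s_0$. The only difference is that you spell out what the paper leaves implicit: a minimal repetition gives an embedded, hence homotopically nontrivial, cycle; the girth bound survives the zero-length edges; and the statement really needs the reducedness hypothesis, which the paper slips into its proof.
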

\begin{proof}
Let $\gamma$ be a cyclically reduced closed path in $G_X$. Let $L'$ be its $G$-length. Now since the number of vertices of $G$ is $2q$, and any consecutive sequence of $2 q$ edges of $\gamma$ visits $2q+1$ vertices, at least one of them repeats. The corresponding closed path has $G_X$ length at least $s_0$. Thus any block of $2q$ edges of $\gamma$ has $G_X$ length at least $s_0$. It follows that we write
\[
L'= k (2q) + r
\]
with $0\leq r< 2q$ then 
\[
L\geq \ell_{G_X}(\gamma) \geq k s
\]
and so 
\[
\ell_G(\gamma) = L' \leq 2q \lfloor \frac{L}{s_0} \rfloor + 2q -1 < 2q \left(\frac{L}{s_0} +1\right).
\]
\end{proof}

To sum up, these two lemmas show that any closed geodesic of length at most $L$ on $X$ has a corresponding closed path on $G$ of length at most combinatorial length $L_c$ where 
\begin{equation}\label{eq:comb}
L_c \leq \frac{ 2 q \cosh(w(X))}{s_0} L + 2q.
\end{equation}
on $G$. Similarly, if instead of a closed geodesic $\gamma$ we consider a primitive geodesic multicurve $\Gamma$, the same result holds. 

We now analyze the combinatorial setting. Before we start to count, it is worth observing a few things. One thing is that a closed path in a graph gives us a weighted list of edges it passes through. Dual to $G$, which is trivalent, we have a hexagon decomposition. And for the same reason that normal coordinates\footnote{Normal coordinates are intersection numbers with a triangulation.} of a curve determine a curve, this weighted list of edges determines a unique simple multicurve. And since the number of simple multicurves grows polynomially in terms of their length, this information is quite far from determining the original multircurve. We get slightly more information by looking at the number of weighted oriented edges, but this is still a polynomial amount of information.

We won't appeal to anything fancy for this. Since we have $3q$ edges on $G$, it has $6q$ oriented edges. An important observation is that a non-backtracking path on $G$ that passes through an oriented edge has two possible predecessors and two possible successors, as illustrated in Figure \ref{fig:oriented}.

\begin{figure}[h]
 \centering
\begin{overpic}[width=0.6\linewidth,
tics=10]{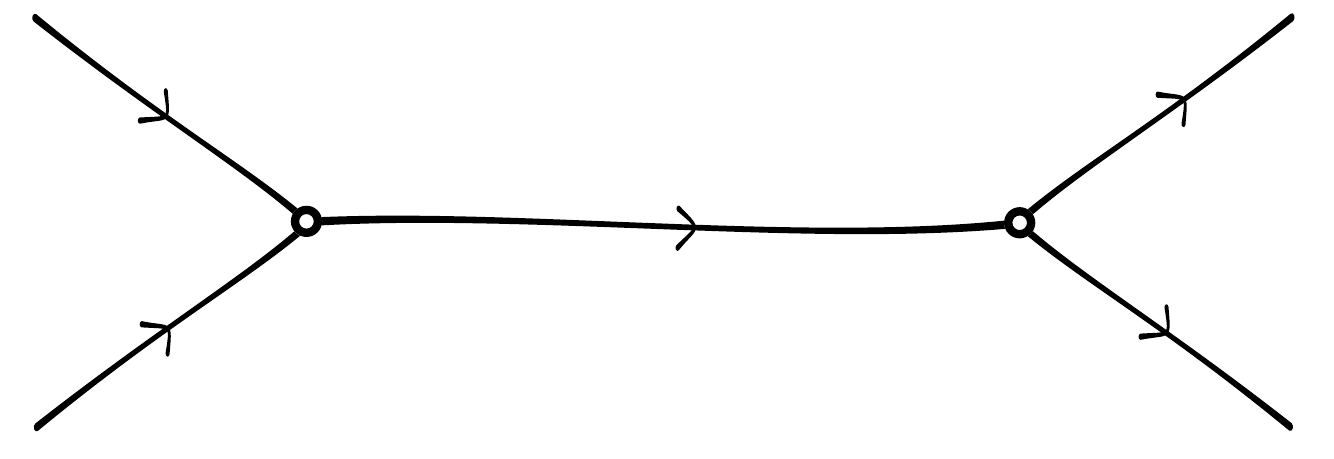}
\end{overpic}
\caption{An oriented edge of $G$ with its possible predecessors and successors}
 \label{fig:oriented}
\end{figure}

So for any integer $L_c>0$, which should be thought of as an upper bound on the combinatorial length of a multicurve, there are at most 
\begin{equation*}\label{eq:orient}
\binom{L_c+6q-1}{6q-1}
\end{equation*}
ways to choose partition its $L_c$ segments into $6q$ parts. But we will need to go one step further. 

We denote by $E_G$ the set of oriented edges of $G$. Consider $e\in E_G$. It has two predecessors, namely the two oriented edges pointing towards it. In particular, if a multicurve passes through $e$, it arrives from one of these two predecessors. For reasons that will become apparent in what follows, we will need to split the number of oriented segments parallel to a given oriented edge into two parts, depending on their predecessor. This amounts to splitting the segments into $12 q$ sets: two for each oriented edge. We bound the full number of choices by 
\begin{equation}\label{eq:poly}
\binom{L_c+12q-1}{12q-1} \leq \left( \frac{e}{12q-1} \right)^{12q} \left( L_c+12q -1\right) ^{12q}.
\end{equation}

To recap, this number of choices determines a local picture for a multicurve: 
\begin{itemize}
\item We know the number of its segments corresponding to each oriented edge of $G$.
\item If a multicurve passes through $e\in E_G$ exactly $m=m_0+m_1$ times, we know how many come from the left ($m_0$) and how many come from the right ($m_1$).
\end{itemize}

This is an obvious overcount: many choices don't correspond to any multicurves at all. For instance, we haven't taken into amount the fact that the number of incoming segments must be the same as the number of outgoing segments, but as we are only aiming for an upper bound, and as these numbers are only polynomial in any event, we leave it at that. 

\subsection{Words}
Now comes the most delicate part of the encoding. Throughout, we consider a fixed multicurve $\Gamma$. 

{\it Ordering segments.} To each oriented edge $e\in E_G$, we look at the segments of $\Gamma$ which pass through $e$ and order them from left to right as follows. The universal cover $T$ of $G$ is a trivalent tree. We fix $\tilde{e}$, a lift of $e$. This splits the limit points of $T$, which can be thought of a subset of $\mathcal{S}^1$, into two parts, the forward limit points and the backward limit points. In particular the forward limit points can be thought of a subset of a finite segment. Now each segment of $\Gamma$ belongs to a component of $\Gamma$ which lifts to an infinite path on $T$. Any two distinct segments of $\Gamma$ correspond to distinct forward limit points. From this, we can deduce a corresponding left to right order: a segment $s$ is left of $s'$ if their corresponding limit points have the same order (see Figure \ref{fig:order}). 

\begin{figure}[h]
 \centering
\begin{overpic}[width=0.5\linewidth,
tics=10]{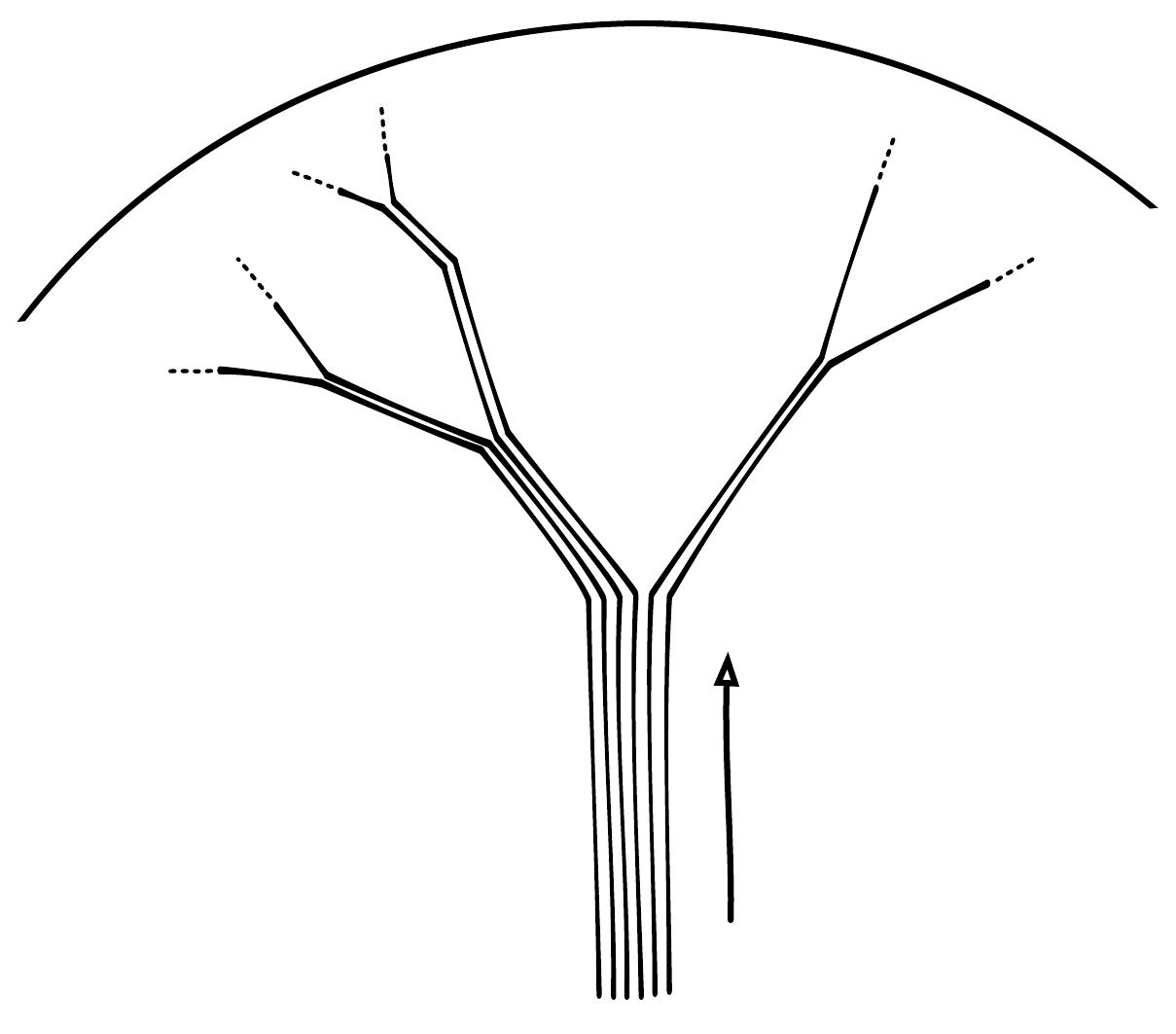}
\put(12,78){$\partial \Hyp$}
\end{overpic}
\caption{Ordering oriented segments by forward limit points (order on the figure is from left to right because, in English, we read words from left to right)}
 \label{fig:order}
\end{figure}

This makes sense: in the universal cover, the two corresponding simple rays must split at one point. 

Now for each $e\in E_G$, we can now associate a word $w_e$ of $0$s and $1$s as follows. We consider the ordered segments and associate a $0$ to it if its predecessor was to its left, and a $1$ if otherwise (see Figure \ref{fig:words}). For a given primitive multicurve $\Gamma \in \multi(X)$, we denote by $w_e(\Gamma)$ the associated word to oriented edge $e\in E_G$.

\begin{figure}[h]
 \centering
\begin{overpic}[width=0.4\linewidth,
tics=10]{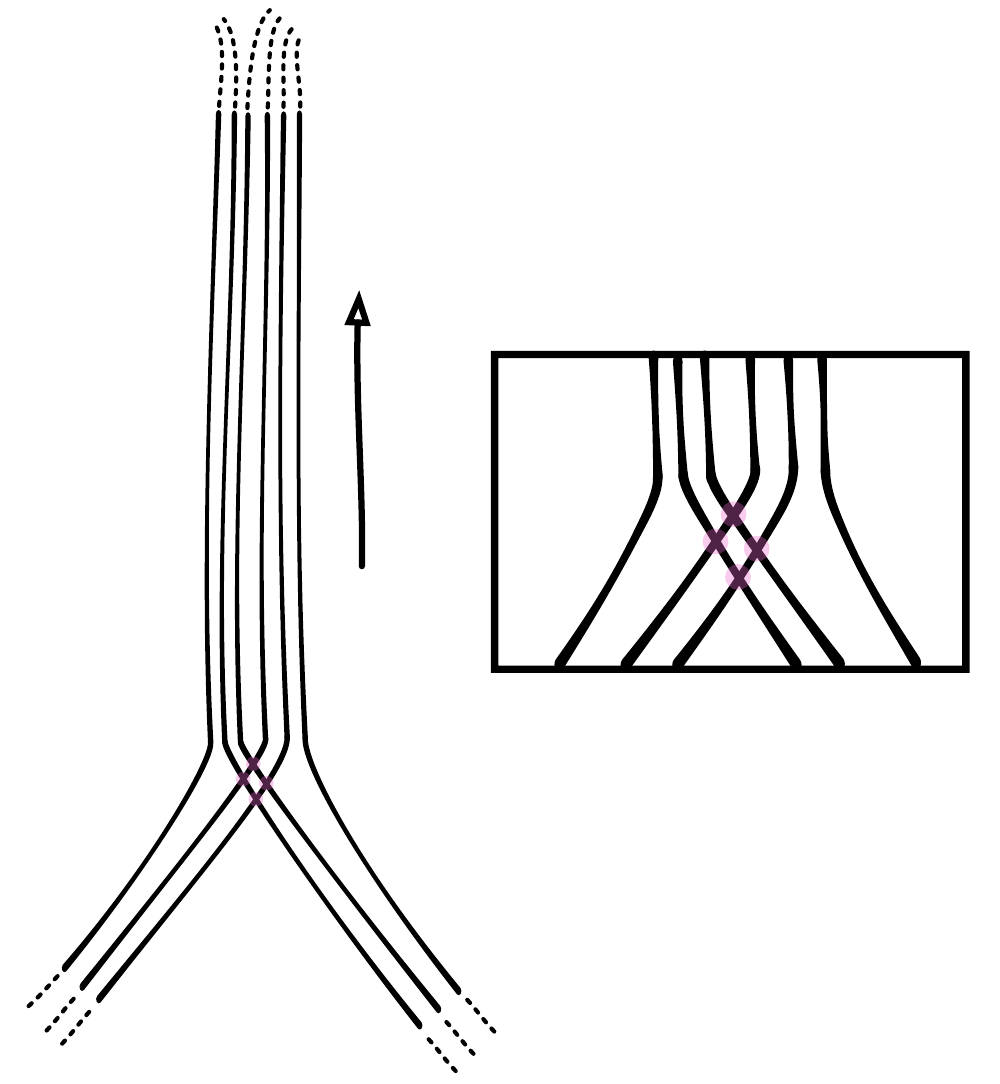}
\put(8,0){$0$}
\put(36,0){$1$}
\end{overpic}
\caption{The local picture associated to the word $011001$}
 \label{fig:words}
\end{figure}

We will need two essential facts about the collection of words: they determine a unique multicurve and we can read a lower bound on the intersection number of the multicurve from the words. 

A set of words is called admissible if it can be realized by some primitive multicurve. We now show that it uniquely determines the multicurve as well. 

\begin{lemma}\label{lem:wordswork} An admissible set of words $w_e$, for $e\in E_G$, determines a unique primitive multicurve. 
\end{lemma}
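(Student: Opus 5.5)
The plan is to show that the words themselves contain a rule for gluing segments end to end. This yields an explicit reconstruction procedure whose output depends only on $(w_e)_{e\in E_G}$ and on the fixed planar (ribbon) structure of $G\subset X$. Admissibility then guarantees that the output is a primitive multicurve, namely the one realizing the words, and uniqueness follows at once.

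\textbf{Setup.} Fix the ribbon structure at each trivalent vertex of $G$; it is inherited from the embedding $G_X\subset X$ and lifts to the planar tree $T\subset\Hyp$. For an oriented edge $e$ entering a vertex $v$, its two possible successors are then labelled canonically as the left successor $e_L$ and the right successor $e_R$. Dually, $e$ has a left predecessor and a right predecessor. For $e\in E_G$ with word $w_e$ of length $m_e$, the $m_e$ segments through $e$ are identified abstractly with the positions $1,\dots,m_e$. These positions are ordered left to right exactly as in the definition of the ordering.

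\textbf{Step 1: block structure.} Among the segments through $e$, those continuing to $e_L$ form an initial block of positions, and those continuing to $e_R$ form the terminal block. The reason is that in $T$ the forward limit points through $\tilde e$ split into the limit points beyond $\tilde e_L$, which form an arc lying entirely to the left, and those beyond $\tilde e_R$. Moreover, the order is preserved when passing to a successor. Two segments of $e$ that both continue to $e_L$ have the same forward limit points whether they are viewed from $\tilde e$ or from $\tilde e_L$, so their relative order is unchanged.

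\textbf{Step 2: block sizes from the words.} The size of the block of $e$ going to $e_L$ equals the number of letters of $w_{e_L}$ that record $e$ as predecessor. This count is the number of $0$s or the number of $1$s in $w_{e_L}$, according to whether $e$ is the left or the right predecessor of $e_L$, which is again read off from the ribbon structure.

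\textbf{Step 3: the successor map.} Let $k$ be the number of letters in $w_{e_L}$ recording $e$. The $j$-th position of $e$ with $j\le k$ is glued to the position of the $j$-th such letter in $w_{e_L}$. The remaining positions of $e$ are glued in order to the letters of $w_{e_R}$ that record $e$. This defines a map $\sigma$ on the finite set $\bigsqcup_e\{1,\dots,m_e\}$ using only the words.

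\textbf{Step 4: the output.} Since the words are admissible, they are the words of some primitive multicurve $\Gamma$, represented by cyclically reduced non-backtracking closed paths on $G$. Steps 1 and 2 show that the genuine successor relation on the segments of $\Gamma$ coincides with $\sigma$. In particular $\sigma$ is a bijection, and its cycles are exactly the closed paths representing the components of $\Gamma$. Each cycle is a cyclically reduced path in a trivalent graph and so determines a unique free homotopy class. Hence the cycles of $\sigma$ recover each component of $\Gamma$ together with its orientation. Since $\sigma$ was built from the words alone, any two multicurves with the same words have the same cycles and therefore coincide. The set of cycles is precisely the primitive multicurve determined by the words.

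The main obstacle is Step 1: it needs a precise proof that the left-to-right order is compatible with the ribbon structure, so that segments turning left occupy an initial block and the order is preserved along the continuation. The first approach to try is to argue in $\Hyp\supset T$. The forward endpoints of rays through $\tilde e$ lie on an arc of $\partial\Hyp$. At the head vertex of $\tilde e$ this arc splits into two subarcs, and by planarity of the embedding the subarc beyond $\tilde e_L$ lies entirely to the left of the subarc beyond $\tilde e_R$.
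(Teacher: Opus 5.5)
Your proposal is correct and follows essentially the same route as the paper. Both arguments use the same steps: the successor blocks in each $w_e$ are consecutive, the block sizes are read off as the number of letters of $w_f$ recording $e$ as predecessor, the blocks are matched in order, and the components are recovered as the cycles of the resulting permutation. The one detail the paper makes explicit and you leave implicit in Step 1 is that the order on $e_L$ is defined via a fixed lift, not necessarily the one adjacent to $\tilde e$. The adjacent lift is carried to the fixed lift by a single deck transformation for the whole block, and this transformation preserves the cyclic order of ends, so the relative order of the segments is unchanged.
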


\begin{proof}
A multicurve is uniquely determined by a collection of cyclic collections of edges. The proof essentially boils down to the fact that you can "read words backwards", and recover the cyclic collections of edges. We provide a more formal proof. 

For $e,f\in E_G$, consider a possible transition $ef$ (that is, $e$ is a possible predecessor of $f$). The positions in $w_f$ whose predecessor is $e$ form a known ordered subset
$C_{e,f}$. Among the positions of $w_e$, those with successor $f$ form a consecutive block $B_{e,f}$, the other consecutive block corresponding to the only other possible successor. As the set of words are admissible, its size is $|C_{e,f}|$, so both blocks are known. We can now match $B_{e,f}$ to $C_{e,f}$ in order. This is the actual successor map. In fact, the deck transformation taking the successor lift of $f$ to its fixed lift is the same for the whole block and preserves the cyclic order of ends. These matchings reconstruct a permutation of all word positions. Its cycles recover exactly the component cyclic words. In particular, no extra choice of a decomposition into components is required.
\end{proof}

Reading a word from left to right, we are interested in pairs of $1$s followed by $0$. Indeed, any such pair corresponds to a pair of axes with alternating endpoints. We call such a pair an {inversion}, and for a word $w$ of $0$s and $1$, we denote by $\inv(w)$ its number of inversions. For instance $\inv(0100110)=5$. 

\begin{lemma}\label{lem:inversions}
Let $\Gamma \in \multi(X)$ and $w_e(\Gamma), e\in E_G$ its set of words. 
Then
\[
\sum_{e\in E_G} \inv(w_e(\Gamma)) \leq \frac{1}{2} i(\Gamma,\Gamma).
\]
\end{lemma}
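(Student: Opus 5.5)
We argue in the universal cover. Fix a lift $\tilde e$ of each oriented edge $e\in E_G$, and regard every segment of $\Gamma$ through $e$ as a lift of a component of $\Gamma$: a bi-infinite non-backtracking path in the trivalent tree $T$ passing through $\tilde e$. It has a forward endpoint (used for the left-to-right order) and a backward endpoint in $\partial T$. Take an inversion of $w_e(\Gamma)$: positions $a$ left of $b$ with letter $1$ at $a$ and $0$ at $b$. The predecessor of $a$ lies to its right and that of $b$ to its left. Because $a$ is left of $b$ going forward, and the backward branchings put $b$ on the left and $a$ on the right, the two paths $\tilde a,\tilde b$ run together along $\tilde e$ and separate at both ends on opposite sides. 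So their four endpoints alternate on $\partial T \subset \mathcal S^1$. Thickening $G$ back to $X$, where $T$ sits in $\Hyp$ as a spine of the universal cover, the geodesic axes with these endpoints must cross in $\Hyp$. This gives one point of intersection between the corresponding lifts of components of $\Gamma$.

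Next we make the assignment injective modulo the deck group. Assign to the inversion $(e,a,b)$ the pair of lifts $\{\tilde a,\tilde b\}$, taken up to simultaneous deck transformations. A transverse intersection point of geodesic representatives of $\Gamma$ in $X$ corresponds to a $\pi_1$-orbit of unordered pairs of distinct lifts with alternating endpoints, where ``distinct'' covers both self-intersections and intersections between different components. Summed over pairs, the number of such orbits is exactly $\frac12 i(\Gamma,\Gamma)$. Here $i(\alpha,\alpha)$ counts each self-intersection twice and distinct components are counted once, consistent with the definition in Section~\ref{sec:prelim}.

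It remains to show that distinct inversions give distinct orbits. Two alternating paths $\tilde a,\tilde b$ in $T$ share a common finite subpath, the intersection of two geodesics in a tree. Up to deck transformation this subpath is a sequence of oriented edges of $G$. An inversion at $(e,a,b)$ requires $e$ to be the first edge of the common path: $a$ and $b$ arrive from different predecessors, so the common path begins at $\tilde e$. Hence the pair orbit determines $e$ as the first edge of the common subpath, with its orientation fixed by using the forward direction of $a$. It also determines which of the two lifts at $e$ plays the role of $a$. The one ambiguity is reversing orientation, which traverses the common segment backwards. Since words are taken with respect to the chosen orientation of each component, each orbit is detected at most once: at the initial edge of its common segment, read in the orientation of $\Gamma$. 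Summing over $e$ gives $\sum_e \inv(w_e(\Gamma)) \le \#\{\text{orbits}\} = \frac12 i(\Gamma,\Gamma)$.

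The main obstacle is this injectivity step, together with checking that alternating endpoints in $T$ really force a transverse crossing of geodesic lifts in $\Hyp$. For the crossing, we use that the boundary of $T$ embeds in $\partial\Hyp$ preserving cyclic order, since $G_X$ is a deformation retract of $X$ and $T$ is the corresponding spine of the universal cover. For injectivity, the care is in confirming that the common segment has a well-defined first edge. If two lifts shared an entire bi-infinite path they would be equal, which is excluded because components are primitive, distinct, and not reverses of each other.
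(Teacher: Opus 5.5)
Your proposal is correct and follows essentially the same route as the paper. An inversion forces the two lifts through $\tilde e$ to split on opposite sides at both ends, so their endpoints alternate and, via the order-preserving identification of $\partial T$ with the limit set, the geodesic axes cross; injectivity into deck orbits of crossing pairs then comes from $\tilde e$ being the first edge of the common segment and the free action of the deck group on edges. One small correction to your closing remark: a well-defined first edge needs the two lifts not to share a backward ray, not merely to be unequal. For pairs arising from inversions this holds automatically, since they split at the tail of $\tilde e$; the paper's remark that distinct periodic lines share no infinite ray covers it in general.
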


\begin{proof}
The proof uses some of the general ideas in a paper of Birman and Series to compute intersection numbers \cite{BirmanSeries84}, originally between simple curves, but extended to computing more general intersection numbers in \cite{DespreLazarus19}.

An inversion in $w_e$ determines two periodic lines whose four ends alternate. The lifted spine is an equivariantly embedded quasi-isometric tree in the convex universal cover of $X$. Its identification of ends with the limit set preserves cyclic order. The corresponding hyperbolic axes therefore cross. The lines enter $\widetilde e$ through different predecessor branches and traverse it in the same direction. Hence $\widetilde e$ is the first edge of their common segment. At every subsequent common edge their predecessors agree, and thus this pair produces at most one inversion. Distinct periodic lines have only a finite common connected portion since a common infinite ray would force identical periodic futures.

This uniqueness persists modulo deck transformations. A transformation identifying two such unordered pairs preserves their chosen orientations and their first common oriented edge. Both first edges are the chosen lift of the same projected edge, so the transformation fixes an edge and is the identity. Here each lift has the orientation pulled back from its component; deck transformations preserve these orientations. No lift belongs to two distinct components, because their unoriented primitive classes are distinct. Thus inversions inject into deck orbits of crossing pairs of axes. Such an orbit records one self-crossing of a component or one intersection of two different components.
\end{proof}
\subsection{Counting words}
We now give an upper bound on the number of admissible sets of words, which will depend on $K$. 

We suppose that for each $e\in E_G$, we know the number of $0$s and $1$s in $w_e$. What we don't yet know is the exact word. 

For a word $w_e$, let $m_0(e)$ be its number of $0$s and $m_1(e)$ its number of $1$s. To each $1$ in $w_e$, we associate the number of $0$s to its right (equivalently, the number of inversions it determines). This produces a sequence $n_1 \geq \hdots \geq n_{m_1(e)}$ of non-negative integers. 

We make the following observation. 
\begin{proposition}
The integers $n_1\geq \hdots \geq n_{m_1(e)}$ determine $w_e$ exactly.
\end{proposition}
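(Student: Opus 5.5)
The plan is to reconstruct $w_e$ explicitly from the sequence, using that the total number of $0$s, $m_0(e)$, is already known (we are assuming the counts of $0$s and $1$s in $w_e$ are fixed). A binary word with $m_1(e)$ ones and $m_0(e)$ zeros is determined by the positions of its $1$s. Equivalently, it is determined by the number of $0$s lying to the right of each $1$. So the whole proof reduces to checking that this correspondence between words and non-increasing sequences is well defined and injective.

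First I will index the $1$s of $w_e$ from left to right as $1^{(1)},\dots,1^{(m_1(e))}$, and let $n_j$ be the number of $0$s to the right of $1^{(j)}$. Since $1^{(j+1)}$ lies to the right of $1^{(j)}$, every $0$ to its right is also to the right of $1^{(j)}$, so $n_j \geq n_{j+1}$. This justifies the ordering in the statement, and also shows that the sorted sequence is the sequence indexed by position. In particular no information is lost by listing the values in non-increasing order. Each $n_j$ lies in $\{0,\dots,m_0(e)\}$.

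Second, I will write the word down directly. The $j$-th $1$ is preceded by exactly $m_0(e)-n_j$ zeros and by $j-1$ ones, so it sits at position $(j-1)+(m_0(e)-n_j)+1$. All other positions, up to the total length $m_0(e)+m_1(e)$, are $0$s. Equivalently,
\[
w_e = 0^{m_0(e)-n_1}\,1\,0^{n_1-n_2}\,1\,0^{n_2-n_3}\cdots 1\,0^{n_{m_1(e)}},
\]
where every exponent is non-negative by the monotonicity above. Hence two words with the same data $(m_0(e), n_1,\dots,n_{m_1(e)})$ coincide. As a consistency check, $\sum_j n_j = \inv(w_e)$, which is what will later link the count to partitions of at most the intersection number via Lemma~\ref{lem:inversions}.

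I do not expect a real obstacle. The only point needing care is that $m_0(e)$ is part of the known data, since the $n_j$ alone do not fix the number of leading $0$s. I will state explicitly that this is supplied by the choice made in \eqref{eq:poly}, where the number of segments of each oriented edge with each predecessor type, and hence $m_0(e)$ and $m_1(e)$, was already fixed.
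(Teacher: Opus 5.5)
Your proposal is correct and follows essentially the same route as the paper. You write $w_e = 0^{m_0(e)-n_1}\,1\,0^{n_1-n_2}\cdots 1\,0^{n_{m_1(e)}}$, which is the paper's reconstruction: place $n_k-n_{k+1}$ zeros after the $k$-th $1$, then put the remaining zeros at the start. You also correctly note, as the paper does, that the known value $m_0(e)$ is needed to fix the leading block of zeros.
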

\begin{proof}
The word is constructed from the right as follows: place next to the rightmost $1$ its number of corresponding $0$s (that is $n_{m_1(e)}$). Now iterate the process by placing $n_{m_1(e)-1} - n_{m_1(e)}$ next to the penultimate $1$ and so forth. (Equivalently, place exactly $n_{k} - n_{k+1}$ $0$s next to the $k$-th $1$ read from the left, for $k= 1,\hdots, m_1(e)-1$.) Now, to finish, place all remaining $0$s to the left of the leftmost $1$.

Note that this only works because we suppose that we already know the total number of $0$s of $w_e$. 
\end{proof}

Now, quite conveniently, the sum $\sum_{k=1}^{m_1(e)} n_k$ is exactly the number of inversions of the word $w_e$. In particular, we can count how many inversions we can possibly create with at most $K$ self-intersections. Let $K_1,\hdots,K_{6q}$ be non-negative integers each corresponding to the number of inversions of an different oriented edge of $G$. They must satisfy
\[
\sum_{i=1}{6q} K_i=K. 
\]
By the above proposition, it suffices to count, for each choice of $K_1,\hdots,K_{6q}$, the number of possible partitions of each $K_i$. All in all, the number of words is bounded above by the number of $6q$-tuples of partitions of total sum at most $K$. For this, we use the following estimate.

\begin{lemma}\label{lem:partition}
The number $P_{6q}(K)$ of $6q$-tuples of partitions of total sum at most $K$ is bounded above by 
\[
 P_{6q}(K)\le e^{2\pi\sqrt{qK}}
  \]
Hence, the number of possible choices of admissible words is also bounded above by the same quantity.
\end{lemma}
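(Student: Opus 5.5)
We will bound $P_{6q}(K)$ through its generating function, using the standard saddle-point (Chernoff) bound. Write $n=6q$. The generating function for $n$-tuples of partitions, weighted by total size, is $\prod_{j\ge 1}(1-x^j)^{-n}$. To include tuples of total sum \emph{at most} $K$, we multiply by $(1-x)^{-1}$. Hence, for every $0<x<1$,
\[
P_{n}(K)=\sum_{k\le K}[x^k]\prod_{j\ge1}(1-x^j)^{-n}\le x^{-K}\,(1-x)^{-1}\prod_{j\ge1}(1-x^j)^{-n},
\]
since every coefficient of the series is non-negative. We then take logarithms and use the classical estimate
\[
-\sum_{j\ge1}\log(1-x^j)=\sum_{m\ge1}\frac{1}{m}\frac{x^m}{1-x^m}\le \frac{\pi^2}{6}\,\frac{x}{1-x}.
\]
This follows from $\frac{x^m}{1-x^m}\le \frac{x}{m(1-x)}$, which holds because $1+x+\dots+x^{m-1}\ge m x^{m-1}$. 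Setting $x=e^{-t}$ with $t>0$, we have $\frac{x}{1-x}=\frac{1}{e^t-1}\le \frac1t$. This yields
\[
\log P_n(K)\le Kt+\frac{n\pi^2}{6t}+\log\frac{1}{1-e^{-t}}.
\]

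Next we optimize the main terms. Choosing $t=\pi\sqrt{n/(6K)}=\pi\sqrt{q/K}$ gives $Kt+\frac{n\pi^2}{6t}=2\pi\sqrt{nK/6}=2\pi\sqrt{qK}$. This is exactly the claimed exponent, so the whole difficulty lies in the lower-order factor $(1-e^{-t})^{-1}$, which comes from allowing total sum at most $K$ rather than exactly $K$.

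This factor is the step I expect to be the main obstacle, and I would handle it in one of two ways. The first option is to keep a sharper form of the bound above. The inequality $\frac{1}{e^t-1}\le \frac1t-\frac12+\frac{t}{12}$ gains $-\frac{n}{2}\cdot\frac{\pi^2}{6}$ in the exponent. Since $n=6q\ge 6$, this gain of order $q$ comfortably absorbs $\log\frac{1}{1-e^{-t}}\le \log(1+1/t)$ when $t$ is not too small, i.e.\ when $K\lesssim q$. For large $K$ (small $t$), the gain $-\frac{n\pi^2}{12}$ still beats $\log(1+1/t)=O(\log K)$ only up to a point, so I would rather use the second option. The ``at most $K$'' count is the exact count for $n+1$ tuple-slots restricted appropriately: $P_n(K)=\sum_{k\le K}p_n(k)\le (K+1)\,p_n(K)$ by monotonicity of $p_n$ in $k$. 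Then we use the exact-count bound $p_n(K)\le e^{Kt}\prod_j(1-e^{-tj})^{-n}$ with the refined $-\frac{n\pi^2}{12}+O(nt)$ correction to absorb $\log(K+1)$. If this absorption fails for very large $K$, the fallback is a crude separate check in that range, or a slightly smaller choice of $t$ exploiting the strict slack in $\frac{\pi^2}{6}\frac{x}{1-x}\le\frac{\pi^2}{6t}$.

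Finally, the ``hence'' part of the statement follows directly from the preceding discussion. By the previous proposition, each word $w_e$ is determined by the number of its $0$s, which is part of the polynomial data of \eqref{eq:poly}, together with the partition $(n_1\ge\dots\ge n_{m_1(e)})$ of $\inv(w_e)$. By Lemma~\ref{lem:inversions}, $\sum_e \inv(w_e)\le \frac12 i(\Gamma,\Gamma)\le K$. So, once the polynomial data is fixed, the admissible sets of words inject into $6q$-tuples of partitions of total sum at most $K$, and their number is at most $P_{6q}(K)\le e^{2\pi\sqrt{qK}}$.
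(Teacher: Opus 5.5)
\textbf{There is a genuine gap, though it is self-inflicted.} Your strategy is the paper's: a Chernoff/Rankin bound on $\prod_{j}(1-x^j)^{-6q}$, the estimate $\sum_m \frac{1}{m(e^{tm}-1)}\le \frac{\pi^2}{6t}$, and optimization in $t$. The proof is not finished because of the factor $(1-x)^{-1}$ you introduce. It leaves you with $\log P_{6q}(K)\le 2\pi\sqrt{qK}+\log\frac{1}{1-e^{-t}}$. At $t=\pi\sqrt{q/K}$ the extra term is about $\log\frac1t=\frac12\log\frac{K}{\pi^2 q}$, which is unbounded in $K$.

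Neither of your options removes it:
\begin{itemize}
\item Feeding $\frac{1}{e^t-1}\le\frac1t-\frac12+\frac{t}{12}$ through $\sum_m\frac1m\frac{x^m}{1-x^m}\le\frac{\pi^2}{6}\frac{x}{1-x}$ gains only the bounded amount $\frac{q\pi^2}{2}-\frac{q\pi^2 t}{12}$. So it breaks down once $\log\frac1t$ exceeds roughly $\frac{q\pi^2}{2}$.
\item The second option replaces $\log\frac{1}{1-e^{-t}}$ by $\log(K+1)$, which is about twice as large, against the same bounded gain.
\item The ``slack'' in $\frac{\pi^2}{6}\frac{x}{1-x}\le\frac{\pi^2}{6t}$ is exactly that bounded gain: after multiplying by $6q$ it tends to $\frac{q\pi^2}{2}$. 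Moving $t$ off the optimum only increases $Kt+\frac{q\pi^2}{t}$.
\end{itemize}
Logarithmic slack does exist, since $\log\prod_j(1-e^{-tj})^{-1}=\frac{\pi^2}{6t}+\frac12\log\frac{t}{2\pi}+O(t)$, and with $6q\ge 6$ copies it would absorb the extra term. But it sits in your step $\frac{x^m}{1-x^m}\le\frac{x}{m(1-x)}$, which you do not refine.

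\textbf{The missing observation is that the factor is not needed at all.} For $0<x<1$ and $k\le K$ one has $x^{k-K}\ge 1$. Writing $p_{6q}(k)$ for the number of $6q$-tuples of partitions of total sum exactly $k$,
\[
P_{6q}(K)=\sum_{k\le K}p_{6q}(k)\le\sum_{k\ge 0}p_{6q}(k)\,x^{k-K}=x^{-K}\prod_{j\ge1}(1-x^j)^{-6q},
\]
which is precisely the inequality the paper starts from. Your own estimates then give $\log P_{6q}(K)\le Kt+\frac{q\pi^2}{t}$. Your choice $t=\pi\sqrt{q/K}$ yields $2\pi\sqrt{qK}$ for every $K\ge 1$, and $K=0$ is trivial.

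Your value of $t$ is the correct optimizer. The paper's stated $a=\pi\sqrt{q/(2K)}$ is a slip and would only give $\frac{3}{\sqrt2}\pi\sqrt{qK}$.

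Your treatment of the ``hence'' part matches the paper's, with one small precision. To recover the sequence $n_1\ge\dots\ge n_{m_1(e)}$ from a partition of $\inv(w_e)$, you also need $m_1(e)$ to restore the zero entries. It is likewise part of the polynomial data.
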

\begin{proof}
This is a standard counting estimate. The partition product, see e.g. \cite{Andrews}, gives, for every $a>0$,
\[
 P_{6q}(K)\le e^{aK}\prod_{n\ge1}(1-e^{-an})^{-6q}
 \]
 and thus  
 \[
 \log P_{6q}(K)\le aK+6q\sum_{r\ge1}\frac1{r(e^{ar}-1)}
 \le aK+\frac{q\pi^2}{a}.
\]
We can now optimize with respect to $a$. Setting $a= \pi \sqrt{\frac{q}{2K}}$ gives the result.
\end{proof}

Note that, for $K=0$, it equals $1$ (which is reassuring). 

Put all together, we can prove the following restatement of Theorem \ref{thm:boundaryintro} from the introduction. 

\begin{theorem}\label{thm:boundary}
Let $X$ be a hyperbolic surface with geodesic boundary with $|\chi(X)| = q$ and $s_0= \sys(X)$. Then, for any $L>0$, we have
\[
N_X(L,K)  \leq 
\left(\frac{e^{4\pi q/s_0}}{2s_0}\,L+4\right)^{12q} e^{2\pi\sqrt{qK}}.
\]
\end{theorem}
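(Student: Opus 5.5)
The argument is an injection followed by a product count. To each primitive geodesic multicurve $\Gamma$ on $X$ with $\ell_X(\Gamma)\le L$ and $\frac12 i(\Gamma,\Gamma)\le K$, we assign two pieces of data. The first is the ``local picture'' of its pushed version on $G$: for every oriented edge $e\in E_G$, the pair $(m_0(e),m_1(e))$ counting segments entering $e$ from the left and from the right predecessor. The second is the collection of words $(w_e(\Gamma))_{e\in E_G}$. By Lemma \ref{lem:wordswork} the words determine $\Gamma$, so $\Gamma\mapsto$ (local picture, words) is injective. It therefore suffices to bound the number of possible local pictures and, for a fixed local picture, the number of possible word collections.

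\textbf{Step 1: local pictures.} Apply Lemma \ref{lem:spine} to each component of $\Gamma$. Summing over components, $\Gamma$ is homotopic to a closed multipath on $G_X$ of length at most $\cosh(w(X))L$. After cyclically reducing each component, Lemma \ref{lem:comb} applies. One needs a version for multicurves: either argue componentwise, or note that each component has $G$-length at least $1$ and run the block argument on the concatenated list. This yields a combinatorial length $L_c \le \frac{2q\cosh w(X)}{s_0}L+2q$.

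Now use \eqref{eq:systolewidth}: $w(X)<4\pi q/s_0$, so $\cosh w(X)\le e^{w(X)}<e^{4\pi q/s_0}$. The number of local pictures, that is, distributions of at most $L_c$ segments into the $12q$ classes, is at most $\binom{\lfloor L_c\rfloor+12q}{12q}$. Allowing total $\le L_c$ adds one slack class and explains the index. Bound this crudely by
\[
\frac{(L_c+12q)^{12q}}{(12q)!}\le\Big(\frac{e(L_c+12q)}{12q}\Big)^{12q}
=\Big(\frac{e\,L_c}{12q}+e\Big)^{12q}.
\]
Substituting the bound on $L_c$ gives
\[
\frac{eL_c}{12q}\le \frac{e\cosh w(X)}{6s_0}L+\frac e6 ,
\]
so the base is at most $\frac{e^{4\pi q/s_0}}{2s_0}L+\frac{7e}{6}$, using $e/6<1/2$ and $e+e/6<4$. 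This produces the factor $\big(\frac{e^{4\pi q/s_0}}{2s_0}L+4\big)^{12q}$.

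\textbf{Step 2: words.} Fix a local picture, so every $m_0(e)$ and $m_1(e)$ is known. By Lemma \ref{lem:inversions}, $\sum_e \inv(w_e(\Gamma))\le K$. By the Proposition preceding Lemma \ref{lem:partition}, each $w_e$ is determined by a partition of $\inv(w_e)$ into at most $m_1(e)$ parts. The word collection is therefore determined by a $6q$-tuple of partitions of total sum at most $K$, and Lemma \ref{lem:partition} bounds the number of such tuples by $e^{2\pi\sqrt{qK}}$. The case $K=0$ is consistent, since the count is $1$.

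Multiplying the two bounds gives the theorem. I expect the main obstacle to be the bookkeeping in Step 1, for three reasons. First, Lemma \ref{lem:comb} must be made valid for multicurves with many short components: the additive constant $2q$ must not be paid once per component, so the argument should treat blocks of $2q$ consecutive edges globally. Alternatively, one can note that the number of components is itself bounded by $L/s_0$ and absorb this into the linear term. Second, primitivity and the $0$-length edges introduced when splitting high-degree vertices need checking, since they do not affect $G$-length counts. Third, the constants must be made to land exactly on $\frac{1}{2s_0}$ and $4$, which may require a slightly sharper estimate than $\cosh\le e^{w}$ or a better binomial bound.
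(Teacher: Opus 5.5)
Your outline is the paper's proof: push to the spine by Lemma~\ref{lem:spine}, pass to combinatorial length, count local pictures by a binomial coefficient, encode each $w_e$ by the partition of $\inv(w_e)$ given $m_0(e),m_1(e)$, and apply Lemma~\ref{lem:partition}. Your count $\binom{\lfloor L_c\rfloor+12q}{12q}$, for totals at most $L_c$, is more careful than the paper's. Your arithmetic already produces the stated constants ($e/6<1/2$, $7e/6<4$, $\cosh w(X)\le e^{w(X)}<e^{4\pi q/s_0}$), and zero-length edges cause no trouble, so your second and third worries are not real obstacles.

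There is one genuine gap: the bound $L_c\le \frac{2q\cosh w(X)}{s_0}L+2q$ for multicurves is not established. The paper also only asserts it (``the same result holds''), and neither of your two fixes works as stated.
\begin{itemize}
\item Running the block argument on the concatenated list fails. A block of $2q$ entries that straddles two components is not a walk in $G$, so a repeated vertex need not close up a subwalk.
\item Bounding the number of components by $L/s_0$ changes the coefficient of $L$ to roughly $\frac{e(\cosh w(X)+1)}{6s_0}$. This exceeds $\frac{e^{4\pi q/s_0}}{2s_0}$ when $s_0$ is large compared with $q$: then $\cosh w(X)$ and $e^{4\pi q/s_0}$ are both close to $1$, and $e/3>1/2$.
\end{itemize}

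The repair is to remove the additive constant for each component by averaging over all windows instead of using disjoint blocks. Let $\gamma'$ be a cyclically reduced closed path on $G$ with $n$ edges.
\begin{itemize}
\item If $n\ge 2q$, each of the $n$ cyclic windows of $2q$ consecutive edges is a non-backtracking walk through $2q+1$ vertex positions among only $2q$ vertices. So it contains a nonempty reduced closed subwalk. Since $G_X$ is a spine, this subwalk is homotopically nontrivial in $X$, hence has $G_X$-length at least $s_0$. Each edge occurrence lies in exactly $2q$ windows, so summing over windows gives $2q\,\ell_{G_X}(\gamma')\ge n\,s_0$.
\item If $n<2q$, the same inequality holds because $\ell_{G_X}(\gamma')\ge s_0$.
\end{itemize}
Thus $\ell_G(\gamma')\le\frac{2q}{s_0}\ell_{G_X}(\gamma')$ for every component. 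Summing over components gives $L_c\le\frac{2q\cosh w(X)}{s_0}L$ with no additive term at all. Your Step~1 computation then goes through unchanged, with $e$ in place of $7e/6$.
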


\begin{proof}
To prove the result, it suffices to compile the previously obtained estimates, namely Lemmas \ref{lem:spine}, \ref{lem:comb}, the estimate in equation \ref{eq:poly}, and Lemma \ref{lem:partition}. By combining these inequalities, we have
\begin{eqnarray*}
N_X(L,K) & \leq &
\left(\frac{e}{12q-1}\right)^{12q}
\left(2q\left(\frac{e^{\frac{4\pi q}{s_0}}}{s_0}L+1\right)+12q-1\right)^{12q}
\\
& = & \left(
\frac{2q\,e^{1+4\pi q/s_0}}{(12q-1)s_0}\,L
+\frac{e(14q-1)}{12q-1}
\right)^{12q}\\
& < & \left(\frac{e^{4\pi q/s_0}}{2s_0}\,L+4\right)^{12q} e^{2\pi\sqrt{qK}}.
\end{eqnarray*}

\end{proof}

\section{Closed surfaces}\label{sec:closed}

We now focus on the setting of closed surfaces.

The strategy is as follows: given a closed surface $X$, we choose an embedded metric disk and remove a smaller concentric disk to obtain a surface with a hole $X_0$. Lengths of curves on $X_0$ are comparable to lengths of curves on $X$ (this is illustrated in Figure \ref{fig:hole}). 

\begin{figure}[h]
 \centering
\begin{overpic}[width=0.6\linewidth,
tics=10]{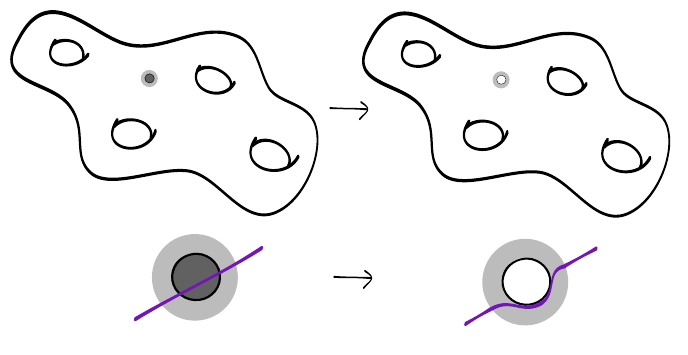}
\end{overpic}
\caption{Punching a hole into $X$ to obtain $X_0$}
 \label{fig:hole}
\end{figure}

We then construct a hyperbolic surface $X_1$ with geodesic boundary, homeomorphic to $X_0$, on which lengths of curves are controlled by those on $X_0$, and hence by those on $X$. Filling the boundary does not induce a bijection of free homotopy classes, but all we need is an injective map from closed geodesics of $X$ to those of $X_1$ that controls lengths and does not increase self-intersection numbers. We can then apply the results of the previous section.

By a theorem of Yamada \cite{Yamada82}, $X$ contains an embedded disk
of radius
\[
r_1=\arcsinh\!\left(\frac{2}{\sqrt{3}}\right).
\]
Choose $p$ to be the centre of such a disk, set $r_0=r_1/2$, and remove the open disk of radius $r_0$ centered at $p$ to obtain $X_0$. 

To study $X_0$, we begin with a more general geometric lemma comparing distances in a disk with intrinsic distances after removing a smaller concentric disk.

\begin{lemma}\label{lem:disk-detour}
Let $r_1=\arcsinh(2/\sqrt{3})$ and $r_0=r_1/2$. For any $p,q\in\partial B_{r_1}$,
\[
d_{B_{r_1}\setminus B_{r_0}}(p,q)
\leq \frac65\,d_{B_{r_1}}(p,q),
\]
where the distance on the left is intrinsic and paths along the inner boundary are allowed.
\end{lemma}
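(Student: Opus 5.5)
The plan is to reduce to an explicit planar computation in the hyperbolic plane, parametrized by the angle $\theta\in[0,\pi]$ subtended at the centre by $p,q\in\partial B_{r_1}$. First, determine when the geodesic segment $[p,q]$ meets $B_{r_0}$. The distance $d$ from the centre to the geodesic through $p,q$ satisfies $\tanh d=\tanh r_1\cos(\theta/2)$. So the segment avoids the open disk $B_{r_0}$ exactly when $\cos(\theta/2)\ge \tanh r_0/\tanh r_1=:\cos\alpha$, i.e. when $\theta\le 2\alpha$. In that range the intrinsic distance equals $d_{B_{r_1}}(p,q)$ and there is nothing to prove.

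For $\theta\in[2\alpha,\pi]$ I would not determine the intrinsic distance exactly, but bound it by an explicit competitor path. Let $t$ be the length of a tangent segment from a point of $\partial B_{r_1}$ to $\partial B_{r_0}$, so that $\cosh r_1=\cosh r_0\cosh t$. The competitor goes from $p$ along the tangent segment to $\partial B_{r_0}$, then along the inner circle, then along the tangent segment to $q$. The right-angled triangle formed by the centre, $p$ and the tangency point has angle $\alpha$ at the centre, with exactly the same $\cos\alpha$ as above. Hence the competitor has length
\[
N(\theta)=2t+(\theta-2\alpha)\sinh r_0 .
\]
At $\theta=2\alpha$ it coincides with the geodesic, so $N(2\alpha)=2t=D(2\alpha)$. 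Here $D(\theta)=d_{B_{r_1}}(p,q)$ is given by
\[
\cosh D(\theta)=\cosh^2 r_1-\sinh^2 r_1\cos\theta .
\]

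It then suffices to show $F(\theta)=\tfrac65 D(\theta)-N(\theta)\ge 0$ on $[2\alpha,\pi]$. Since $N$ is affine, $F$ is concave as soon as $D$ is concave there, and a concave function that is nonnegative at both endpoints is nonnegative throughout. At $\theta=2\alpha$ we have $F=\tfrac15\cdot 2t>0$.

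At $\theta=\pi$ the check is numerical. With $\sinh r_1=2/\sqrt3$ we get $\cosh r_1=\sqrt{7/3}$ and $r_1\approx 0.987$, hence $r_0\approx0.493$, $\sinh r_0\approx 0.513$ and $\cos\alpha\approx 0.604$, so $\alpha\approx0.922$. Also $\cosh t\approx1.359$, so $t\approx 0.824$. This gives $N(\pi)\approx 2.31$ against $D(\pi)=2r_1\approx1.97$, a ratio of about $1.17<6/5$.

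The main obstacle is the concavity of $D$ on $[2\alpha,\pi]$. I would verify it by differentiating the formula for $\cosh D(\theta)$ twice and using $\theta\ge 2\alpha$ with the explicit values of $r_1$. If concavity fails near $2\alpha$, the fallback is to compare $N$ and $\tfrac65D$ directly on a few subintervals, using monotonicity of $D$ and the linearity of $N$.
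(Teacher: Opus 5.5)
Your proposal is correct and is essentially the paper's proof: the same reduction to the central angle $\theta$, the same tangent--arc--tangent competitor of length $2t+(\theta-2\alpha)\sinh r_0$ for $\theta\ge 2\alpha$, the same concavity of the chord length $D$, and the same numerical check at $\theta=\pi$ (ratio $\approx 1.17$). The only difference is the last step: you conclude from concavity of $\tfrac{6}{5}D-N$ and its endpoint values, whereas the paper uses the tangency $N'(2\alpha)=D'(2\alpha)=\sinh r_0$ to show that $N/D$ is nondecreasing. The concavity you flag as the main obstacle holds on all of $[0,\pi]$ for any $r_1$: from $\sinh(D/2)=\sinh r_1\sin(\theta/2)$ one gets $D''(\theta)=-\sinh r_1\cosh^2 r_1\sin(\theta/2)\,\big/\,\bigl(2(1+\sinh^2 r_1\sin^2(\theta/2))^{3/2}\bigr)\le 0$, so no fallback is needed.
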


\begin{proof}
The strategy is to compare distances between points on the boundary of the disk without a hole punched into it and the disk with a hole. This is illustrated in Figure \ref{fig:estimate}. 

\begin{figure}[h]
 \centering
\begin{overpic}[width=0.4\linewidth,
tics=10]{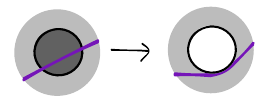}
\put(3.5,5){$p$}
\put(38,23){$q$}
\end{overpic}
\caption{A sketch of how to compare distances on the two disks.}
 \label{fig:estimate}
\end{figure}

Let $\theta\in[0,\pi]$ be the smaller central angle between $p$
and $q$. Using the geometry of the triangle they form with the center of the disk, their distance in the disk without the hole is
\[
d_D(p,q) =2\arcsinh\!\left(\sinh r_1\sin(\theta/2)\right) =:f(\theta)
\]
which we set to be a function of the angle $f(\theta)$ as $r_1$ is fixed.

\begin{figure}[h]
 \centering
\begin{overpic}[width=0.6\linewidth,
tics=10]{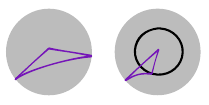}
\put(22.5,22){$\theta$}
\put(71.5,20){$\alpha$}
\put(11,21){$r_1$}
\put(75,17){$r_0$}
\put(66,10){$\delta$}
\end{overpic}
\caption{How angles and distances relate.}
 \label{fig:estimate2}
\end{figure}

We again make use of the hyperbolic trigonometry of a (right-angled) triangle. Figure \ref{fig:estimate2} portrays the triangles used in this argument. A tangent segment from an outer endpoint to the inner circle has length $\delta$ where \[
\delta=\arccosh\!\left(\frac{\cosh r_1}{\cosh r_0}\right).\]
And its central angle $\alpha$ satisfies
\[
\alpha=\arccos\!\left(\frac{\tanh r_0}{\tanh r_1}\right).
\]

If $\theta\leq2\alpha$, the geodesic chord avoids the open inner
disk, so there is nothing to prove. Otherwise, the two tangent
segments and the intervening inner-circle arc give an admissible
path of length
\[
l(\theta)=2\delta+(\theta-2\alpha)\sinh r_0.
\]
At $\theta=2\alpha$, we have $l(\theta)=f(\theta)$ and $l'=f'=\sinh r_0$.
Since $f$ is a concave function and $l$ is affine,
\[
(l'f-lf')'=-lf''\geq0.
\]
Thus $\sfrac{l}{f}$ is nondecreasing on $[2\alpha,\pi]$. Its maximum is
attained at $\theta=\pi$, and substitution gives
\[
\frac{l(\pi)}{f(\pi)}
=
\frac{2d+(\pi-2\alpha)\sinh r_0}{2r_1}
<\frac65.
\]
This proves the claim.
\end{proof}

We next choose a triangulation of a closed surface $X$ with controlled angles. To construct the triangulation, we choose any point $p\in X$ and consider its Vorono\"i decomposition around $p$. We now choose a triangulation dual to this decomposition. As we argued before, for a Vorono\"i decomposition around a boundary curve, if the Vorono\"i graph is trivalent, its dual is exactly the triangulation we want. If not, we triangulate the larger cells in any way, using diagonals. The obtained triangulation is a Delaunay decomposition, in that it lifts to a genuine Delaunay triangulation of the the universal cover $\Hyp$ (see \cite{DeBlois18, DespreSchlenkerTeillaud24}). 

\begin{lemma}\label{lem:delaunay-angles}
Let $X$ be a closed hyperbolic surface of genus $g\geq2$ with $\sys(X)=s_0$. For any $p\in X$, the minimum angle in triangles of the Delaunay decomposition based at $p$ satisfies
\[
\theta_0\geq 2s_0\, e^{-\frac{32(g-1)}{s_0}}.
\]
\end{lemma}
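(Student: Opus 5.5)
We bound the minimum angle from below by controlling, for each Delaunay triangle, the circumradius from above and the length of the shortest side from below. The triangles are dual to the Vorono\"i decomposition around the lifts of $p$. Every vertex of the Vorono\"i graph is a point $x$ equidistant from at least three lifts of $p$, and no lift is closer. So each Delaunay triangle has an empty circumscribed disk of radius $R$, equal to the distance from $x$ to the nearest lift of $p$. In particular $R\le \diam(X)$, and by \eqref{eq:diam} with $q=2g-2$ we get
\[
R<\frac{4\pi(2g-2)}{s_0}=\frac{8\pi(g-1)}{s_0}.
\]
On the other hand, two distinct vertices of a lifted triangle are distinct lifts of $p$. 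They differ by a nontrivial deck transformation, so their distance is at least the translation length, hence at least $s_0$. Every side of every triangle therefore has length at least $s_0$, while all its vertices lie on a circle of radius $R$.

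Next comes the trigonometric step. Take a triangle inscribed in a circle of radius $R$ with centre $x$. Let $\theta$ be its angle at a vertex $A$, and let $c$ be the opposite side $BC$, so $c\ge s_0$. I would handle the case of a triangle with angle sum near zero by working directly with the isosceles triangles $xAB$, $xBC$, $xCA$. These have legs $R$ and base angles $\beta_{AB},\beta_{BC},\beta_{CA}$. Each angle of the Delaunay triangle is a sum or difference of two such base angles: $\theta=\beta_{AB}+\beta_{CA}-\beta_{BC}$ when $x$ lies inside, with appropriate sign changes when it does not. A cleaner route is hyperbolic trigonometry in the right triangle obtained by dropping the perpendicular from $x$ to $BC$. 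This gives
\[
\sinh(c/2)=\sinh R\,\sin(\varphi/2),
\]
where $\varphi$ is the central angle subtending $BC$. I would then relate $\theta$ to $\varphi$ and the angle defect. Alternatively, and more robustly, I would use the angle formula at $A$ through the distance from $A$ to the line $BC$. If $h$ is the distance from $A$ to the geodesic through $BC$ and $c_A$ is the adjacent side $AB\le 2R$, then $\sin\theta_B=\sinh h/\sinh c_A$. It is also cleaner to lower-bound the angle at $B$ or $C$ via the area: the triangle's area is its angle defect, at most $\pi$, and the angles are related through the hyperbolic law of sines, $\sin\theta_A/\sinh a=\sin\theta_B/\sinh b$.

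Concretely, I would bound the smallest angle $\theta_A$ using the law of sines together with an area/height estimate. The empty-disk property and the bound on sides give $a\ge s_0$ and $b,c\le 2R$. The law of sines yields
\[
\sin\theta_A=\frac{\sinh a}{\sinh b}\sin\theta_B\ge \frac{\sinh s_0}{\sinh 2R}\sin\theta_B.
\]
So it remains to see that some other angle of the triangle is not small. The angles at the endpoints of the longest side are the smaller ones. Hence the angle opposite the longest side is the largest, and I need it bounded below by an absolute constant. Here I would use that the circumcentre configuration forces the largest angle of a triangle with all sides $\ge s_0$ and circumradius $R$ to be comparable to $\pi/3$ minus a defect term. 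This is the step I expect to be the main obstacle, since hyperbolic triangles can have all angles small. Instead I would apply the sine law to a side pair, choosing $\theta_B$ to be the largest angle and bounding $\sin\theta_B$ below by $\sinh(s_0/2)/\sinh R$ via the isosceles decomposition.

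Combining everything gives $\theta_0\gtrsim \sinh(s_0)\sinh(s_0/2)/\sinh^2 R$. With $\sinh R\le e^{R}\le e^{8\pi(g-1)/s_0}$ and $\sinh(s_0)\ge s_0$, this is at least of the form $c\,s_0\,e^{-16\pi(g-1)/s_0}$. Some care with $\sin\theta\le\theta$, and absorbing the constants using $s_0\le$ a universal bound for closed surfaces, then yields $\theta_0\ge 2s_0e^{-32(g-1)/s_0}$; note $16\pi<32\cdot\pi/2$, which leaves room in the exponent.
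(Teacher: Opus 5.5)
Your proposal does not prove the stated inequality. There are two gaps, one in the trigonometry and one in the bound on the circumradius.

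The trigonometric step is never completed. After the law of sines, everything rests on the assertion that the largest angle satisfies $\sin\theta_B\ge\sinh(s_0/2)/\sinh R$ ``via the isosceles decomposition''. This is a Euclidean inscribed-angle intuition, and for hyperbolic triangles with sides $\ge s_0$ inscribed in a circle of radius $R$ it is false. An equilateral triangle of side $s_0$ has $\sinh(s_0/2)=\tfrac{\sqrt3}{2}\sinh R$, yet all its angles are $<\pi/3$, so $\sin\theta_B<\sqrt3/2=\sinh(s_0/2)/\sinh R$. For long sides its angles are of order $e^{-s_0/2}$, so no multiplicative constant rescues the claim. Even granting it, multiplying by $\sinh s_0/\sinh 2R$ gives a denominator $\sinh(2R)\sinh R\approx e^{3R}/4$, not $\sinh^2R$, and a numerator of order $s_0^2$ rather than $s_0$.

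The missing ingredient is the identity, valid for any triangle with circumradius $R$,
\[
\sin\theta_A=\frac{\sinh(a/2)}{\tanh R\,\cosh(b/2)\cosh(c/2)}.
\]
It follows from $\tanh R=\tanh(a/2)/\cos(S-\theta_A)$, which is your isosceles decomposition, together with the half-side formula $\cosh^2(a/2)=\cos(S-\theta_B)\cos(S-\theta_C)/(\sin\theta_B\sin\theta_C)$, where $S=(\theta_A+\theta_B+\theta_C)/2$. This identity bounds every angle at once, with no need to control the largest one. Using $a\ge s_0$ and $b,c\le 2R$, it gives $\sin\theta\ge\sinh(s_0/2)/(\sinh R\cosh R)\ge2s_0e^{-2R}$. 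This is the route the paper takes.

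Your bound on $R$ is also too weak. The estimate $R\le\diam(X)<8\pi(g-1)/s_0$ is correct. But even combined with the sharp estimate $\sin\theta\ge2s_0e^{-2R}$, it only gives $\theta_0\ge2s_0e^{-16\pi(g-1)/s_0}$. This is strictly weaker than the claim, because $16\pi>32$. Your closing remark is mistaken on two counts. First, $16\pi=32\cdot\pi/2$ is an equality, not a strict inequality. Second, the relevant comparison is with $32$, and there the inequality goes the wrong way, so there is no room in the exponent. An upper bound on $s_0$ cannot repair this, since the problem lies in the exponent, not the prefactor.

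What you need is $R<16(g-1)/s_0$. The paper obtains this by a packing argument adapted to the Vorono\"i vertex $v$ with $R=d(p,v)$. Take the points at distances $ks_0$, $0\le k\le\lfloor R/s_0\rfloor$, along a shortest path from $p$ to $v$. Their open disks of radius $s_0/2$ are embedded and pairwise disjoint. Comparing areas gives $(\lfloor R/s_0\rfloor+1)\,2\pi(\cosh(s_0/2)-1)\le4\pi(g-1)$. Hence $R<2(g-1)s_0/(\cosh(s_0/2)-1)\le16(g-1)/s_0$, which is exactly what the exponent $32(g-1)/s_0$ requires.
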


\begin{proof}
Take a Delaunary triangulation constructed as above, that is with arcs either dual to the Voronoi decomposition or by adding additional diaogonals. Note that each resulting triangle has the same circumcircle as its original face.

Let $R$ be the radius of such a circle. Its center projects to a Voronoi vertex $v$, and $R=d(p,v)$. Along a distance path from $p$ to $v$, choose points at distances $ks_0$, for $k=0,\ldots,\lfloor R/s_0\rfloor$. Their open disks of radius $s_0/2$ are embedded and pairwise disjoint. We can now compare areas to get
\[
\left(\left\lfloor\frac{R}{s_0}\right\rfloor+1\right)
2\pi\bigl(\cosh(s_0/2)-1\bigr)
\leq4\pi(g-1).
\]
Consequently,
\[
R<
\frac{2(g-1)s_0}{\cosh(s_0/2)-1}
\leq\frac{16(g-1)}{s_0}.
\]

Now consider a lifted triangle with side lengths $a,b,c$ and angle $\theta$ opposite $a$. Its vertices are distinct lifts of $p$, so $a\geq s_0$, and $b,c\leq2R$. By hyperbolic trigonometry in the triangle we have
\[
\sin\theta
=
\frac{\sinh(a/2)}
{\tanh R\,\cosh(b/2)\cosh(c/2)}
\geq
\frac{\sinh(s_0/2)}{\sinh R\cosh R}
\geq2s_0 \,e^{-2R}.
\]
Using $\theta\geq\sin\theta$ and the bound on $R$ proves the claim.
\end{proof}

We now prove a lemma which allows us to compare lengths of arcs on a truncated triangle and a hexagon. The triangle is truncated by all points distance at most $r$ from a vertex. The remaining side lengths, $\ell_1,\ell_2,\ell_3$, are supposed to be positive. The comparison hexagon is the corresponding right-angled hexagon with alternating side lengths $\ell_1,\ell_2,\ell_3$ (see Figure \ref{fig:TriHex}). 

\begin{figure}[h]
 \centering
\begin{overpic}[width=0.6\linewidth,
tics=10]{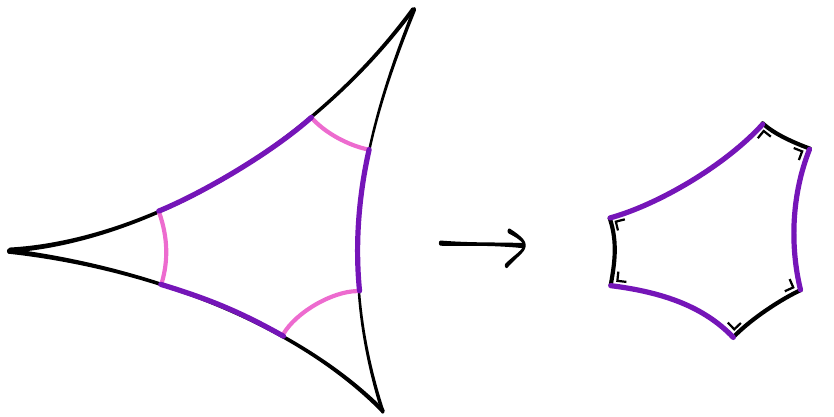}
\put(11,23.5){$r$}
\put(11,14){$r$}
\put(39,41){$r$}
\put(37,03){$r$}
\put(47,35){$r$}
\put(46,08){$r$}
\put(44,21){$\ell_1$}
\put(24,32){$\ell_2$}
\put(24,09){$\ell_3$}
\put(97,21){$\ell_1$}
\put(80,31){$\ell_2$}
\put(78,10){$\ell_3$}
\end{overpic}
\caption{Comparing $T$ and $H$}
 \label{fig:TriHex}
\end{figure}

\begin{lemma}\label{lem:truncate}
Let $T$ be a hyperbolic triangle with sides of lengths $a_i=2r+\ell_i$, for $i=1,2,3$, where $r>0$ and
$\ell_i\geq\ell_0>0$, and suppose its angles are at least $\theta_0>0$. Let $H$ be the right-angled hyperbolic hexagon with alternating side lengths $\ell_1,\ell_2,\ell_3$.

We identify the middle segment $t_i$ of length $\ell_i$ on each side of $T$ with the corresponding side $h_i$ of $H$ by arclength, and denote this correspondence by $\varphi$. Then, for all $p,q\in t_1\cup t_2\cup t_3$, we have
\[
d_H(\varphi(p),\varphi(q))
\leq
\frac{2r+\coth\ell_0}{r\sin(\theta_0/2)}
\,d_T(p,q).
\]
\end{lemma}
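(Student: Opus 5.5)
We compare both distances with a path in $T$ and build a path in $H$ of controlled length. Fix $p,q\in t_1\cup t_2\cup t_3$ and let $\sigma$ be the $T$-geodesic between them, of length $d=d_T(p,q)$. We construct a path in $H$ from $\varphi(p)$ to $\varphi(q)$ that runs along the sides $h_i$ and the alternate sides of $H$. Its length is bounded by a multiple of $d$, and the multiplicative constant will be $(2r+\coth\ell_0)/(r\sin(\theta_0/2))$.

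\emph{Case 1: $p,q$ lie on the same $t_i$.} Then $d_T(p,q)$ equals their distance along $t_i$, which is the distance along $h_i$. The latter bounds $d_H$, so the constant $1$ suffices. Note that $1$ is at most the stated constant, since $\sin(\theta_0/2)\le 1$ and $2r/r=2$.

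\emph{Case 2: $p\in t_i$, $q\in t_j$ with $i\neq j$.} The sides $a_i$ and $a_j$ meet at a vertex $v$ with angle $\theta\ge\theta_0$. Write $x=d(p,v)\ge r$ and $y=d(q,v)\ge r$. I will first bound $d$ below. Comparing with the Euclidean-like lower bound for a triangle of angle $\theta$, the hyperbolic law of cosines gives
\[
d \;\ge\; \text{(distance from } p \text{ to the other side)} \;\ge\; \arcsinh\bigl(\sinh x\,\sin\theta\bigr),
\]
and more usefully $d\ge 2\arcsinh\bigl(\sinh(\min(x,y))\sin(\theta/2)\bigr)\ge 2\min(x,y)\sin(\theta/2)$ by the isoceles comparison and convexity. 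If instead the geodesic passes near the truncated region, one can also use $d\ge |x-y|$.

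For the upper bound on $d_H$, travel in $H$ from $\varphi(p)$ along $h_i$ to the endpoint nearest the corner corresponding to $v$. This costs $x-r$. Then cross the alternate side $c_{ij}$ of $H$ between $h_i$ and $h_j$, and travel along $h_j$, costing $y-r$. By the hexagon formula, $\cosh c_{ij}$ is expressed through $\coth$'s and $\sinh$'s of the $\ell$'s, and I expect the bound $c_{ij}\le$ something like $\coth\ell_0\cdot(\ldots)$. I would rather avoid this and instead bound $d_H$ by the shorter of two routes. The idea is to bound $c_{ij}$ crudely by a quantity linear in $\coth\ell_0$; alternatively, replace the crossing by a geodesic in $H$ whose length is at most $\coth\ell_0$ times a factor. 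This gives
\[
d_H(\varphi(p),\varphi(q))\le (x-r)+(y-r)+C(\ell_0),
\]
while $d\ge \max\bigl(|x-y|,\,2r\sin(\theta_0/2)\bigr)$ and also $d\ge (x+y)\sin(\theta_0/2)/\text{const}$. Dividing, the ratio is at most $\bigl(2r+\coth\ell_0\bigr)/\bigl(r\sin(\theta_0/2)\bigr)$ after using $x+y-2r+C\le (x+y)(1+C/(2r))$ and $d\ge r\sin(\theta_0/2)\cdot(x+y)/(2r)\cdot\ldots$.

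\emph{Main obstacle.} The delicate step is controlling the alternate side of $H$ near the vertex $v$, and its relation to the linear lower bound on $d$ coming from the angle $\theta_0$. I would try the following. Use the right-angled pentagon obtained by cutting $H$ to see that the distance in $H$ between the endpoints of $h_i$ and $h_j$ is small only when the angle is large. More simply, bound it by $\coth\ell_0$ via $\cosh c=\frac{\cosh\ell_k+\cosh\ell_i\cosh\ell_j}{\sinh\ell_i\sinh\ell_j}$, which forces $c$ to be large only when the $\ell$'s are small. Then absorb everything against the lower bound $d\ge r\sin(\theta_0/2)$, which holds whenever $p,q$ lie on different truncated sides.
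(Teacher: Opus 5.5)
There is a genuine gap, and it sits at the step you flag as the main obstacle: the bound on the alternate side $c_{ij}$ of $H$. Your proposed estimate $c_{ij}\le\coth\ell_0$ is false, as is any bound $C(\ell_0)$ that does not depend on $r$. The hexagon formula does not make $c_{ij}$ large only when the $\ell$'s are small. The quantity $\cosh c_{ij}=\frac{\cosh\ell_k+\cosh\ell_i\cosh\ell_j}{\sinh\ell_i\sinh\ell_j}$ is also large whenever $\ell_k$ is large compared with $\ell_i+\ell_j$, and nothing about $H$ alone prevents this.

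For example, let $T$ be isosceles with legs $2r+\ell_0$ and apex angle $\pi/2$, and take $\theta_0$ to be its smallest angle. Then $\cosh a_k=\cosh^2(2r+\ell_0)$, so $\ell_k=a_k-2r\approx 2r+2\ell_0-\log 2$. Consequently $c_{ij}\ge \ell_k-\log(2\sinh^2\ell_0)$, which grows like $2r$.

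The missing ingredient is the triangle inequality in $T$. From $a_k<a_i+a_j$ you get $\ell_k<\ell_i+\ell_j+2r$, hence $\cosh\ell_k\le e^{2r}(\cosh\ell_i\cosh\ell_j+\sinh\ell_i\sinh\ell_j)$ and
\[
\cosh c_{ij}\le e^{2r}+(1+e^{2r})\coth^2\ell_0 .
\]
Using $\arccosh u\le\log(2u)$ and $\log(2+4u^2)\le 2u$ for $u\ge 1$, this gives $c_{ij}\le 2r+2\coth\ell_0$. The $2r$ term cannot be removed.

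This also breaks your final bookkeeping, which was tuned to $C=\coth\ell_0$. Combining $x+y-2r+C\le(x+y)(1+C/(2r))$ with $d\ge\sin(\theta_0/2)(x+y)/2$ gives precisely $(2r+C)/(r\sin(\theta_0/2))$. With the correct $C=2r+2\coth\ell_0$, the same manipulation is off by a factor $2$.

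The argument can be repaired as follows.
\begin{itemize}
\item Your boundary path gives $d_H(\varphi(p),\varphi(q))\le (x-r)+c_{ij}+(y-r)\le x+y+2\coth\ell_0$.
\item Your two lower bounds, $d\ge 2\min(x,y)\sin(\theta/2)$ and $d\ge|x-y|$, do combine to $x+y\le 2d/\sin(\theta_0/2)$.
\item Since $\min(x,y)\ge r$, you have $d\ge 2r\sin(\theta_0/2)$, which gives $2\coth\ell_0\le \coth\ell_0\, d/(r\sin(\theta_0/2))$.
\end{itemize}
Adding the last two estimates yields exactly the stated constant.

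The paper instead uses the identity $\cosh d=\sin^2(\theta/2)\cosh(x+y)+\cos^2(\theta/2)\cosh(x-y)$. This gives $d\ge\sin(\theta_0/2)(x+y)$ directly, a factor $2$ better than your combined bound, after which the crude estimate has room to spare.
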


\begin{proof}
If $p,q \in t_1\cup t_2\cup t_3$ lie on the same side, their distance is preserved and there is nothing to prove. So suppose not, and let $v$ be the common vertex of their triangle sides, and let $\theta\geq\theta_0$ be the angle at $v$. We can express the distances between the vertices are our points as
\[
d_T(v,p)=r+x,\qquad d_T(v,q)=r+y,
\qquad x,y\geq0.
\]
The triangle cosine rule gives
\begin{align*}
\cosh d_T(p,q)
&=
\sin^2(\theta/2)\cosh(2r+x+y)
+\cos^2(\theta/2)\cosh(x-y)\\
&\geq
1+2\sin^2(\theta_0/2)
       \sinh^2\!\left(r+\frac{x+y}{2}\right)\\
&\geq
\cosh\!\left(\sin(\theta_0/2)(2r+x+y)\right).
\end{align*}
The last inequality follows from
$\sinh(bt)\leq b\sinh t$ for $0\leq b\leq1$.
Thus
\[
d_T(p,q)\geq\sin(\theta_0/2)(2r+x+y).
\]

Let $e$ be side of $H$ between the sides containing $\varphi(p)$ and $\varphi(q)$. The hexagon cosine rule and the triangle inequality $\ell_i<\ell_j+\ell_k+2r$ give
\[
\cosh\ell(e)
=
\frac{\cosh\ell_i+\cosh\ell_j\cosh\ell_k}
{\sinh\ell_j\sinh\ell_k}
\leq
e^{2r}+(1+e^{2r})\coth^2\ell_0.
\]
Now using the inequalities $\arccosh(u) \leq\log(2u)$ and $\log(2+4u^2)\leq 2u$ for $u\geq1$, it follows that
\[
\ell(e)
\leq 2r+\log\!\left(2+4\coth^2\ell_0\right)
\leq 2r+2\coth\ell_0.
\]
We now bound the distance between $\varphi(p)$ and $\varphi(q)$ in the most obvious way, by considering the concatenated path along the boundary to obtain
\[
d_H(\varphi(p),\varphi(q))
\leq x+y+2r+2\coth\ell_0
\leq
\frac{2r+\coth\ell_0}{r\sin(\theta_0/2)}
\,d_T(p,q),
\]
as required.
\end{proof}

We can now construct the surface $X_1$ with one boundary component and transfer curves to it. 

Before doing so, we make an important observation. There is a well-defined map topological map from (interior) curves on $X_1$ to curves on $X$ defined by filling the boundary component by a disk: 
\[
\Phi: \curves(X_1)\to \curves(X).
\]
This map $\Phi$ is the same as what is generally called a "forgetful" map between a punctured surface and a surface where a puncture is "forgotten". In the case of an actual puncture, If this map is done conformally, meaning that the closed surface is conformally equivalent to the punctured surface with a punctured filled in, then by the Schwartz lemma, curves decrease in length. Here however, we need to construct $X_1$ to be begin with, and once we do so, we are comparing a surface with geodesic boundary to a closed surface, so there is no obvious natural map as in the case with punctures. Everything needs to be proved by hand. Furthermore, since we are starting with a closed surface, we need to compare a smaller set of curves to curves on the surface with boundary, which requires constructing a type of inverse to $\Phi$: the map $\rho$ we will construct has the property that each curve on $X$ will be sent to a curve which lies in the preimage of $\Phi$. 

We now prove two lemmas: in the first lemma we give the construction and prove the more delicate part, that is that the lengths of curves under our map $\rho$ are at most increased by a factor we can control. The second lemma is more direct, and shows that lengths on $X_1$ are least those of $X$, giving us the needed control on the systole of $X_1$. 

\begin{lemma}\label{lem:injection}
Let $X$ be a closed hyperbolic surface of genus $g\geq2$ with $\sys(X)=s_0$. There exist a compact hyperbolic surface $X_1$ of genus $g$ with one geodesic boundary component and an injective map
\[
\rho:\curves(X)\longrightarrow\curves(X_1)
\]
such that, for every $\gamma\in\curves(X)$,
\[
\ell_{X_1}(\rho(\gamma))
\leq
\frac{10}{s_0}e^{\frac{32(g-1)}{s_0}}\ell_X(\gamma)
\]
Moreover, $i(\rho(\gamma),\rho(\gamma))
= i(\gamma,\gamma)$ and $\Phi(\rho(\gamma)) = \gamma$. 
\end{lemma}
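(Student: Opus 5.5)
We will build $X_1$ from the Delaunay triangulation of Lemma \ref{lem:delaunay-angles} based at the centre $p$ of the Yamada disk. Every vertex of this triangulation is $p$ itself, since it has a single vertex. Each triangle has sides of length at least $s_0$ and angles at least $\theta_0\geq 2s_0e^{-32(g-1)/s_0}$. We truncate each triangle $T$ by removing the sector of radius $r$ around each of its corners, with $r$ at most $r_0$ so that the removed pieces lie inside the embedded disk $B_{r_1}(p)$. The union of the truncated triangles is then $X$ minus a disk around $p$, essentially $X_0$. The remaining side segments have length $\ell_i=a_i-2r\geq s_0-2r$. To keep $\ell_0>0$ with a useful bound, we would instead take $r=\min\{r_0,s_0/4\}$, so that $\ell_0\geq s_0/2$. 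We replace each truncated triangle by the right-angled hexagon $H$ with alternating sides $\ell_1,\ell_2,\ell_3$, and glue the hexagons along the $h_i$ by the same identifications as the triangle sides. Hexagon gluings along full sides by isometries give a hyperbolic surface whose remaining hexagon sides join up into a single closed geodesic boundary. This is $X_1$, and it has genus $g$ and one boundary component, since it is homeomorphic to $X_0$.

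Next we define $\rho$. Given a closed geodesic $\gamma$ on $X$, we homotope it off $B_{r_0}(p)$. The natural homotopy class is determined by pushing each arc crossing the disk to one side of $p$. This can be done consistently: it is exactly choosing a lift of $\gamma$ under the forgetful map $\Phi$. For $\gamma$ not passing through $p$, which is generic, we take the geodesic itself and replace each subarc inside $B_{r_1}$ by the shorter detour around $B_{r_0}$ on the side not containing $p$. Lemma \ref{lem:disk-detour} bounds the resulting path $\gamma_0\subset X_0$ by $\frac65\ell_X(\gamma)$, and $\gamma_0$ has no new self-intersections. Arcs that meet the disk only near the inner boundary can be slid along it without creating crossings, since parallel detours can be nested. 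The class $\rho(\gamma)$ is the class of $\gamma_0$ in $X_0\cong X_1$, so $\Phi(\rho(\gamma))=\gamma$ automatically, which gives injectivity. To compare intersection numbers, $i(\rho(\gamma),\rho(\gamma))\le i(\gamma,\gamma)$ holds because $\gamma_0$ realizes the geodesic's crossings. The reverse inequality holds because $\Phi$ cannot decrease self-intersection, as filling a disk maps a minimal representative to a representative. If $\gamma$ passes through $p$, we perturb it slightly first; this changes nothing in the counts.

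For the length comparison, we cut $\gamma_0$ into pieces crossing truncated triangles, with endpoints on the segments $t_i$, plus pieces running along the inner circle near a vertex. Each crossing piece is mapped by Lemma \ref{lem:truncate} to a path in $H$ of length at most $\frac{2r+\coth\ell_0}{r\sin(\theta_0/2)}$ times its length. Pieces along truncation arcs can also be charged there, because $d_T$ between points on adjacent sides is comparable to the arc length, or alternatively absorbed into the detour estimate. Concatenating gives a closed curve in $X_1$ in the class $\rho(\gamma)$, so
\[
\ell_{X_1}(\rho(\gamma))\leq \frac65\cdot\frac{2r+\coth(s_0/2)}{r\sin(\theta_0/2)}\,\ell_X(\gamma).
\]
Inserting $\sin(\theta_0/2)\geq\theta_0/\pi$, the bound on $\theta_0$, $\coth(s_0/2)\le 1+2/s_0$ and $r\gtrsim \min(r_0,s_0)$, the factor becomes of order $s_0^{-1}\cdot s_0^{-1}e^{32(g-1)/s_0}$. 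This is the point I expect to be the main obstacle: obtaining exactly $\frac{10}{s_0}e^{32(g-1)/s_0}$ rather than an extra power of $1/s_0$. I would first try to save a factor by using a sharper lower bound on $d_T(p,q)$, namely $\sinh(a/2)$ rather than $s_0$ in the angle estimate. Alternatively, I would take $r$ comparable to $r_0$ only when $s_0$ is large and track constants, accepting a slightly worse constant if necessary.

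The bookkeeping for arcs entering the removed corners, together with the side-consistency of the detours, is routine but must be done carefully. The analytic difficulty is concentrated in the constant.
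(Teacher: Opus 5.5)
There is a genuine gap, and it is the one you flag yourself. With your choices the argument does not give the stated bound $\frac{10}{s_0}e^{32(g-1)/s_0}$. That explicit constant is the content of the lemma, and it is what feeds Theorem~\ref{thm:closed}.

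The missing observation is that the edges of $\mathcal T$ are much longer than your estimate $a_i\geq s_0$ suggests. Each edge is a geodesic loop based at $p$. Since $B(p,r_1)$ is embedded, distinct lifts of $p$ are at distance at least $2r_1$, so every edge has length at least $2r_1$, independently of $s_0$. You can therefore always truncate at $r=r_0=r_1/2$; there is no need for $r=\min\{r_0,s_0/4\}$. The middle segments then satisfy $\ell_i\geq 2r_1-2r_0=r_1$, and Lemma~\ref{lem:truncate} applies with $r=r_0$ and $\ell_0=r_1$, both universal constants. The only $s_0$-dependence comes from the angle. Using $\sin(\theta_0/2)\geq\theta_0/\pi$ and Lemma~\ref{lem:delaunay-angles},
\[
\frac65\cdot\frac{2r_0+\coth r_1}{r_0\sin(\theta_0/2)}\leq \frac{6\pi}{5}\left(1+\frac{\coth r_1}{r_1}\right)\frac{e^{32(g-1)/s_0}}{s_0}<\frac{10}{s_0}e^{32(g-1)/s_0},
\]
since $\frac{6\pi}{5}\left(1+\coth r_1/r_1\right)\approx 8.8$.

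Your accounting of the loss is also too optimistic. With $r=s_0/4$ and $\ell_0=s_0/2$ one has $\coth(\ell_0)/r\approx 8/s_0^2$. So for small $s_0$ your factor is of order $s_0^{-3}e^{32(g-1)/s_0}$, not $s_0^{-2}e^{32(g-1)/s_0}$. Neither remedy you propose touches this, because the loss comes from $r$ and $\ell_0$, not from $\theta_0$. Sharpening the angle estimate does nothing to it, and ``accepting a worse constant'' proves a different statement.

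Relatedly, you do not need to charge pieces along the truncation arcs separately. If you cut $\gamma_0$ only at the edges of $\mathcal T$, every piece is a path inside a truncated triangle between two points of the middle segments. Its length is then at least their $d_T$-distance, and Lemma~\ref{lem:truncate} applies directly.

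The rest of your outline is the paper's construction: the Yamada disk, the one-vertex Delaunay triangulation, truncation and hexagons, detours via Lemma~\ref{lem:disk-detour}, and $\Phi\circ\rho=\mathrm{id}$ for injectivity. Your explicit argument for $i(\rho(\gamma),\rho(\gamma))\geq i(\gamma,\gamma)$, filling in a minimal representative, is correct. Despite your wording, what it shows is that filling cannot \emph{increase} self-intersection.
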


\begin{proof}
Choose the embedded disk centered at $p$ as above, with $r_1=\arcsinh(2/\sqrt3)$ and $r_0=r_1/2$. Let $\mathcal T$ be the one-vertex triangulation based at $p$ given by Lemma~\ref{lem:delaunay-angles}.

Note that every edge of $\mathcal T$ has length at least $2r_1$, because it enters and exits the disk. Removing the disk of radius $r_0$ results in truncated triangles with the truncated sides of length at least $r_1$.

We now replacece each triangle by the right-angled hexagon whose alternating sides have these remaining lengths, and glue corresponding sides according to $\mathcal T$.
The resulting surface $X_1$ has genus $g$ and one boundary component which, due to the pasting of geodesics at right angles, results in a geodesic boundary component.

Let $\gamma$ be a closed geodesic on $X$. First replace its passages through $B(p,r_1)$ by detours around $B(p,r_0)$. We apply Lemma~\ref{lem:disk-detour} to each of the concerned arcs. 

We make an important observation: these replacements can be made simultaneously without increasing the number of intersections. One way to think of this is to imagine the point $p$ as disjoint from the original curve on $X$, and letting a disk grow around $p$. At first, the disk is disjoint and then as it grows, this forces the curve to circumvent the smaller disk. Note that the curve remains unchanged outside of the disk of radius $r_1$. During the process of growing the disk, no transversal intersection points are added. More formally, we can use the so-called bigon criterion (see \cite[Section~1.2]{FarbMargalit12} or \cite{HassScott85}): shortest representatives of the passage arcs in their annular homotopy classes are in minimal position and a bigon would allow a shortening. Note that we don't need that the intersection isn't reduced by this process (but it isn't). 

Now, according to Lemma~\ref{lem:disk-detour}, the ratio of the lengths of the modified arcs can be bounded by $\frac{6}{5}$ and hence the resulting curve $\gamma_0$ on $X_0$ satisfies
\[
\ell_{X_0}(\gamma_0)\leq\frac65\,\ell_X(\gamma)
\]
and 
\[
i(\gamma_0,\gamma_0) = i(\gamma,\gamma).
\]
where $\gamma_0$ is the resulting curve in $X_0$.

Now we cut $\gamma_0$ at its intersection points with $\mathcal T$. Every intersection point lies on a retained middle segment, since $\gamma_0$ stays outside $B(p,r_0)$. We can transfer the endpoints by the arclength identifications and replace each triangle passage by the geodesic segment joining its corresponding endpoints in the hexagon. The segments match across glued sides and form a closed curve on $X_1$.

Again, this operation does not increase intersections. In each hexagon, two geodesic segments cross only when their endpoints alternate on its boundary and this is the same on the (truncated) triangles. The endpoint identifications preserve their cyclic order.

Let $\rho(\gamma)$ be the closed geodesic in the resulting free homotopy class. Straightening decreases length and does not increase geometric self-intersection number (but again, it doesn't increase intersection number either in this case). We apply Lemma~\ref{lem:truncate}, with $r=r_0$ and $\ell_0=r_1$ to obtain
\[
\ell_{X_1}(\rho(\gamma))
\leq
\frac65\,
\frac{2r_0+\coth r_1}{r_0\sin(\theta_0/2)}
\,\ell_X(\gamma).
\]
Since $\sin(\theta_0/2)\geq\theta_0/\pi$ and Lemma~\ref{lem:delaunay-angles} gives $\theta_0\geq2s_0e^{-32(g-1)/s_0}$, we obtain
\[
\ell_{X_1}(\rho(\gamma))
\leq
\frac{6\pi}{5}
\left(1+\frac{\coth r_1}{r_1}\right)
\frac{e^{32(g-1)/s_0}}{s_0}\,\ell_X(\gamma)
<
\frac{10}{s_0}e^{32(g-1)/s_0}\ell_X(\gamma).
\]
The intersection estimate follows from the construction.

Finally, we show that $\Phi(\rho(\gamma))=\gamma$. This amounts to showing that filling the boundary reverses the hole punching operation. Each transferred passage then has the same endpoints and homotopy class in its triangle as the original passage, so the resulting closed curve is freely homotopic to $\gamma$. This proves injectivity. It also shows that $\rho(\gamma)$ is neither trivial nor peripheral. If it were a proper power, filling would make $\gamma$ a proper power as well. Thus $\rho(\gamma)$ is primitive.
\end{proof}

We now pass to the second lemma, which shows that the surface $X_1$ has additional important properties. Its interior systole is at least the systole of $X$, and its boundary geodesic is of length at least a constant.

\begin{lemma}\label{lem:systole}
The surface $X_1$ satisfies
\[
\ell_{X_1}(\gamma) \geq \ell_X(\Phi(\gamma))\
\]
for all $\gamma \in \curves(X_1)$ which survive under the forgetful map. 
Moreover,
\[
\ell_{X_1}(\partial X_1)\ge 2\pi\sinh r_0
\qquad\text{and}\qquad
\sys(X_1)\geq \min\{s_0,2\pi\sinh r_0\}
\]
where $s_0 = \sys(X)$. 
\end{lemma}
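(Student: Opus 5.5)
The plan is to construct a length-nonincreasing map from $X_1$ back to $X$ and then read off both statements. The key step is a $1$-Lipschitz map $\pi: X_1 \to X$, defined hexagon by hexagon. Each right-angled hexagon $H$ of $X_1$ is mapped to the corresponding truncated triangle $T\setminus B(p,r_0)$ of $X$, in a way that:
\begin{itemize}
\item restricts to the arclength identification $\varphi^{-1}$ on the sides $h_i$, so that it glues consistently across edges;
\item sends the three short sides of $H$, which form $\partial X_1$, into the boundary circle $\partial B(p,r_0)$;
\item is $1$-Lipschitz.
\end{itemize}

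To construct it, I would realise $T$ in $\Hyp$ with vertex lifts $v_1,v_2,v_3$ and compare it with $H$, whose side $h_i$ lies on the common perpendicular to the two other alternate sides. The natural candidate is a piecewise map built from nearest-point projections and the Fermi coordinates along $h_i$. The comparison fact I need is that two points at distance $r$ beyond the ends of $t_i$, on the lines of $T$ meeting at angle $\theta<\pi$, are closer than the corresponding points in $H$, where the lines are ultraparallel with common perpendicular of positive length. This is the monotonicity of distance in the angle from the cosine formula already used in Lemma~\ref{lem:truncate}: in $T$ the sides converge to the vertex $v$, in $H$ they diverge. Granting such a $1$-Lipschitz $\pi$, a closed geodesic $\gamma$ in $X_1$ maps to a closed curve $\pi(\gamma)$ in $X\setminus B(p,r_0)\subset X$. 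Since $\pi$ is compatible with the gluing and the filling, $\pi(\gamma)$ is freely homotopic to $\Phi(\gamma)$. Hence $\ell_{X_1}(\gamma)\ge \ell(\pi(\gamma))\ge \ell_X(\Phi(\gamma))$.

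\textbf{Boundary length.} The boundary $\partial X_1$ is the union of the three short sides of each hexagon, and $\pi$ maps it onto $\partial B(p,r_0)$ with degree one, since it wraps once around the filled disk. A degree-one map onto a circle is surjective, and $1$-Lipschitz, so $\ell(\partial X_1)\ge \ell(\partial B(p,r_0)) = 2\pi\sinh r_0$.

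\textbf{Systole.} Any closed geodesic of $X_1$ other than $\partial X_1$ either survives $\Phi$, in which case its length is at least $s_0$ by the first part, or becomes trivial or peripheral under $\Phi$. In the latter case it is homotopic in the once-holed surface to a power of $\partial X_1$, so it is not an interior closed geodesic at all. Therefore $\sys(X_1)\ge \min\{s_0, 2\pi\sinh r_0\}$.

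\textbf{Main obstacle.} The hard part is making the Lipschitz comparison map from $H$ to the truncated triangle rigorous. I would first try to avoid an explicit map by working at the level of curves. Any closed geodesic in $X_1$ decomposes into arcs crossing hexagons between sides $h_i,h_j$. For each such arc, I would show that the corresponding chord in the truncated triangle, possibly detouring around $B(p,r_0)$, is no longer than it. I would prove this by the angle comparison: reflecting across the perpendicular bisector reduces the question to a quadrilateral with two right angles versus a triangle with angle $\theta/2$. Arcs touching the short sides would be handled the same way, with the short side replaced by an arc of the circle $\partial B(p,r_0)$. Checking that the short side of $H$ is at least as long as the corresponding circle arc, of length $\theta\sinh r_0$, is exactly what the boundary-length claim requires, so I expect this comparison to be the crux.
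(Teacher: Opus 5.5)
There is a genuine gap. Your outline matches the structure of the paper's argument: cut a closed geodesic of $X_1$ into hexagon arcs, replace each by the corresponding arc in the triangle, and compare the short sides of $H$ with circle arcs of angle $\theta$. But the step that carries all the content is assumed rather than proved, and the mechanism you suggest for it would not work.

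The inequality $d_T(\varphi(p),\varphi(q))\le d_H(p,q)$ does not follow from ``the sides converge in $T$ and diverge in $H$''. If the short side $h$ of $H$ were small relative to $\theta$, the inequality would already fail at the corners $u=v=0$. What is needed is a quantitative relation between $h$ and $\theta$. It must come from the fact that $T$ has sides $\ell_i+2r_0$ while $H$ has alternate sides $\ell_i$. The paper obtains it by comparing the two cosine laws,
\[
1-\cos\theta=\frac{\cosh(c+2r_0)-\cosh(a-b)}{\sinh(a+2r_0)\sinh(b+2r_0)},\qquad
\cosh h-1=\frac{\cosh c+\cosh(a-b)}{\sinh a\,\sinh b}.
\]
Using $\sinh(x+2r_0)\ge e^{2r_0}\sinh x$ and $\cosh(x+2r_0)\le e^{2r_0}\cosh x$, this yields $\cosh h-1\ge e^{2r_0}(1-\cos\theta)$.

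Now substitute this into
$\cosh d_H=\cosh(u-v)+(\cosh h-1)\cosh u\cosh v$ and
$\cosh d_T=\cosh(u-v)+(1-\cos\theta)\sinh(r_0+u)\sinh(r_0+v)$, and use $\sinh(r_0+u)\le e^{r_0}\cosh u$. This gives $d_T\le d_H$ for all $u,v$. Your reflection trick would only reach the symmetric case $u=v$.

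The same inequality gives $\sinh(h/2)\ge e^{r_0}\sin(\theta/2)$, hence $h\ge\theta\sinh r_0$ by elementary estimates on $\sinh$ and $\sin$. Summing over the corners at $p$, whose angles total $2\pi$, gives the boundary bound directly, with no global Lipschitz map or degree argument. You correctly flag $h\ge\theta\sinh r_0$ as the crux, but you leave it open.

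Two smaller points.

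\emph{Avoiding the disk is unnecessary.} You do not need the replacement arc to avoid $B(p,r_0)$. The comparison curve lives in $X$, where the disk is filled in, so the chord in the full triangle $T$ suffices. That is exactly what makes the simple distance formula above usable.

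\emph{The systole step needs simplicity.} It is false that every closed geodesic of $X_1$ that becomes trivial in $X$ is homotopic to a power of $\partial X_1$. The kernel of $\pi_1(X_1)\to\pi_1(X)$ is the normal closure of the boundary class, and it contains many primitive non-peripheral elements. Your argument goes through only because a systole is simple. A simple closed curve that becomes null-homotopic after capping bounds a disk containing the cap, hence is isotopic to $\partial X_1$. You should invoke that simplicity explicitly.
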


\begin{proof}
Consider a triangle of $\mathcal T$ whose side lengths are $a+2r_0$, $b+2r+0$ and $c+2r_0$, so that the corresponding hexagon has alternating side lengths $a$, $b$ and $c$. Let $\theta$ be the angle between the original sides of lengths $a+2r_0$ and $b+2r_0$, and let $h$ be the length of the boundary side of the hexagon joining its sides of lengths $a$ and $b$.

The hyperbolic cosine law in the triangle gives 
\[
1-\cos\theta
=
\frac{\cosh(c+2r_0)-\cosh(a-b)}{\sinh(a+2r_0)\sinh(b+2r_0)}
\]
whereas the same formula for the hexagon gives
\[   
\cosh h-1 =
\frac{\cosh c+\cosh(a-b)} {\sinh a\,\sinh b}.
\]
Using
\[
\sinh(x+2r_0)\ge e^{2r_0}\sinh x, \qquad
\cosh(x+2r_0)\le e^{2r_0}\cosh x
\]
for $x\geq 0$ we obtain
\begin{equation}\label{eq:trunc}
\cosh h-1\ge e^{2r_0}(1-\cos\theta).
\end{equation}

Now take points $p,q$ on the hexagon sides of lengths $a$ and $b$, at distances $u$ and $v$ from their respective endpoints on the side of length $h$. The corresponding points ($\varphi(p),\varphi(q)$ using the notation from Lemma \ref{lem:truncate}) in the original triangle lie at distances $r+u$ and $r+v$ from the vertex of angle $\theta$. Writing $d_H(p,q)$ and $d_T(\varphi(p),\varphi(q))$ for the respective distances between these points, we have
\[
\begin{aligned}
\cosh d_H(p,q)
&=
\cosh(u-v)+(\cosh h-1)\cosh u\cosh v,\\
\cosh d_T(\varphi(p),\varphi(q))
&=
\cosh(u-v)+(1-\cos\theta)
                 \sinh(r+u)\sinh(r+v).
\end{aligned}
\]
Since $\sinh(r+u)\le e^r\cosh u$, equation \eqref{eq:trunc} implies $d_T(\varphi(p),\varphi(q))\le d_H(p,q)$. The same argument applies to every pair of distinct alternating sides, while distances between points on the same side are unchanged.

Now, take any closed geodesic $\gamma$ on $X_1$ cut it at its intersection points with the glued sides of the hexagons. This cuts $\gamma$ into a sequence of geodesic segments on the hexagons. By taking the curve on $X$ obtained by concatenating the same sequence of segments on the truncated triangles gives a closed path, homotopic to $\Phi(\gamma)$. Now, by applying above inequality, we have $\ell_{X_1}(\gamma) \geq \ell_X(\Phi(\gamma))$ whenever $\Phi(\gamma)$ is non-trivial. 

We now estimate the boundary length. Equation \eqref{eq:trunc} gives
\begin{equation}\label{eq:boundary}
h\geq 2\arcsinh\left(e^{r_0}\sin\frac{\theta}{2}\right).
\end{equation}

Set $x=\theta/2$ and $t=\sinh r_0$, and 

Lift the triangle to $\Hyp$. Extend the two sides of the triangle which meet in angle $\theta$ to the boundary of $\Hyp$, and consider the geodesic $\delta$ joining the corresponding ideal endpoints. Now, by considering the angle bisector which meets $\delta$ at a right angle and by using hyperbolic trigonometry in a right-angled triangle, the distance between $\delta$ and the vertex is $\arcsinh\left(\cot\left(\frac{\theta}{2}\right)\right)$. Now the original opposite side to $\theta$ meets the bisector before this ideal geodesic and, since the radius-$r_0$ corner sector does not meet the opposite side, it follows
that
\[
r_0\leq \arcsinh\left(\cot\left(\frac{\theta}{2}\right)\right).
\]
Hence, 
\[
\sinh(r_0)\, \frac{\theta}{2} \leq \frac{\theta}{2} \cot\left(\frac{\theta}{2}\right).
\]
Now using the fact that$\sinh$ is convex on $[0,1]$, and since 
$\sinh 1<4/\pi$, $\sin \left(\frac{\theta}{2}\right) \geq \frac{\theta}{\pi}$ and $e^{r_0}\geq 2 \sinh r_0$, we have
\[
\sinh \left( \frac{\theta}{2} \sinh r_0 \right) \leq \frac{\theta}{2} \sinh r_0 \sinh 1 \leq \frac{4}{\pi} \frac{\theta}{2} \sinh r_0 \leq e^{r_0} \sin  \left( \frac{\theta}{2}\right).
\]
From this, by taking the $\arcsinh$ on both sides and applying equation \eqref{eq:boundary}, we obtain
\[
h\geq  \theta\sinh r_0.
\]
We can now sum over all corners of the triangulation, whose angles sum to $2\pi$, and compare this to the sum of all $h$s, which sum to the boundary length of $X_1$. Hence
\[
\ell_{X_1}(\partial X_1) \geq 2\pi\sinh r_0.
\]
Finally, as all geodesics on $X_1$ are at least as long at their images under the forgetful map on $X$, the interior systole of $X_1$ is at least the systole of $X$. If the systole of $X_1$ has trivial image under the forgetful map, then, since it is simple, it is in fact exactly the boundary geodesic and as such it has length is at least $2\pi\sinh r_0$. \end{proof}

We note that, since we have chosen an explicit value for $r_0 (= \frac{1}{2} \arcsinh(2/\sqrt{3}))$, we have an explicit value for $2\pi\sinh r_0 (\approx 3.23)$ but the proof doesn't use this at all. In fact, it holds for any $r>0$ such that there exists an embedded disk of radius $r$ on $X$, and a construction of a companion surface $X_1$ by truncating a triangulation and building $X_1$ from the corresponding hexagons.

Putting these results together, we can prove the following, which is a restatement of Theorem \ref{thm:closedintro} from the introduction. 

\begin{theorem}[Closed surface bound]\label{thm:closed}
Let $X$ be a closed hyperbolic surface with $|\chi(X)|=q$ and $s'_0=\min\{\sys(X), 2\pi\sinh r_0\}$. Then, for any $L>0$,
\[
N_X(L,K) \leq
\left(
\frac{5}{\left(s'_0\right)^2}
e^{\frac{16q+4\pi(q+1)}{s'_0}}\,L+4
\right)^{12(q+1)}
e^{2\pi\sqrt{(q+1)K}}.
\]
\end{theorem}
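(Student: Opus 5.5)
The plan is to transfer the count on $X$ to a count on the companion surface $X_1$ of Lemma \ref{lem:injection}, and then apply Theorem \ref{thm:boundary} to $X_1$.

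\textbf{Step 1: extend $\rho$ to multicurves.} Given a primitive geodesic multicurve $\Gamma=\gamma_1\cup\cdots\cup\gamma_m$ on $X$ with $\ell_X(\Gamma)\le L$ and $\frac12 i(\Gamma,\Gamma)\le K$, set $\rho(\Gamma)=\rho(\gamma_1)\cup\cdots\cup\rho(\gamma_m)$.
\begin{itemize}
\item The $\rho(\gamma_k)$ are distinct and not mutually inverse, since $\Phi\circ\rho=\mathrm{id}$. So $\rho(\Gamma)\in\multi(X_1)$, and $\Gamma\mapsto\rho(\Gamma)$ is injective.
\item The construction in Lemma \ref{lem:injection} (growing the disk, then straightening inside the hexagons, where crossing is decided by alternation of endpoints) is done simultaneously for all components. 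So the same argument gives $i(\rho(\gamma_k),\rho(\gamma_l))\le i(\gamma_k,\gamma_l)$ for $k\neq l$, as well as the self-intersection statement.
\item Hence $\frac12 i(\rho(\Gamma),\rho(\Gamma))\le K$. If needed, the pairwise inequality can also be obtained from $\Phi$: filling a disk cannot increase intersection numbers, since $\Phi(\rho(\gamma_k))=\gamma_k$.
\item Lengths are controlled by
\[
\ell_{X_1}(\rho(\Gamma))\le \frac{10}{s_0}e^{32(g-1)/s_0}L .
\]
\end{itemize}
Therefore $N_X(L,K)\le N_{X_1}(L',K)$ with $L'=\frac{10}{s_0}e^{32(g-1)/s_0}L$.

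\textbf{Step 2: parameters of $X_1$.} The surface $X_1$ has genus $g$ and one boundary component, so $|\chi(X_1)|=2g-1=q+1$, where $q=2g-2$. Thus $32(g-1)=16q$. By Lemma \ref{lem:systole}, $\sys(X_1)\ge s'_0$. Since $s'_0\le s_0$, we can replace $s_0$ by $s'_0$ in $L'$, which only enlarges the bound:
\[
L'\le \frac{10}{s'_0}e^{16q/s'_0}L .
\]
Theorem \ref{thm:boundary} is monotone decreasing in the systole, because $e^{4\pi q/s}/s$ decreases in $s$. So we may apply it with systole lower bound $s'_0$ and $q+1$ in place of $q$, giving
\[
N_{X_1}(L',K)\le\left(\frac{e^{4\pi(q+1)/s'_0}}{2s'_0}L'+4\right)^{12(q+1)}e^{2\pi\sqrt{(q+1)K}} .
\]
One should check that the proof of Theorem \ref{thm:boundary} only uses a lower bound on the systole. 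It does: the systole enters only through \eqref{eq:systolewidth} and Lemma \ref{lem:comb}.

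\textbf{Step 3: substitute.} Plugging $L'$ into the last display gives the coefficient
\[
\frac{e^{4\pi(q+1)/s'_0}}{2s'_0}\cdot\frac{10}{s'_0}e^{16q/s'_0}=\frac{5}{(s'_0)^2}e^{\frac{16q+4\pi(q+1)}{s'_0}} ,
\]
which is exactly the stated bound.

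\textbf{Main obstacle.} The delicate point is Step 1. Lemma \ref{lem:injection} is stated for single curves and only for self-intersection, so one has to verify that the simultaneous construction does not increase mutual intersections between distinct components. Beyond that, the argument is bookkeeping.
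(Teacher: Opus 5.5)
Your proposal is correct and follows the paper's route: extend $\rho$ to multicurves, use $|\chi(X_1)|=q+1$ (so $32(g-1)=16q$) and Lemma~\ref{lem:systole} for the systole, apply Theorem~\ref{thm:boundary}, and substitute; your bookkeeping for multicurves and for monotonicity in the systole is more careful than the paper's one-paragraph proof.

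One slip: your fallback argument via $\Phi$ runs the wrong way. Filling the hole can only decrease intersection numbers, so it gives $i(\gamma_k,\gamma_l)\le i(\rho(\gamma_k),\rho(\gamma_l))$, not the inequality you need. The needed $i(\rho(\gamma_k),\rho(\gamma_l))\le i(\gamma_k,\gamma_l)$ must come from the simultaneous construction, as in your main argument. For instance, when $p$ avoids every component, the geodesic representatives of $\Gamma$ already realize the count $i(\gamma_k,\gamma_l)$ in $X\setminus\{p\}$.
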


\begin{proof}
Taking $X$, we construct $X_1$ as above. The curves on $X$ are sent by the map $\rho$ to curves on $X_1$ whose images have lengths $L'$ bounded by Lemma \ref{lem:injection}. Furthermore, by Lemma \ref{lem:systole}, the systole of $X_1$ is at least 
\[
\min\{s_0,2\pi\sinh r_0\}.
\]
An upper bound, for $X_1$, on the number of curves of length at most $L'$ is provided by Theorem \ref{thm:boundary}. Putting these results together gives the result. 
\end{proof}

\section{Corollaries and lower bounds}\label{sec:corollaries}

Following Sapir \cite{Sapir2021} and Torkaman \cite{Torkaman}, we now focus on geodesics of length at most $L$ and at most $\varepsilon L^2$ geodesics. By setting $K= \varepsilon L^2$, we get immediate corollaries to the previous bounds, which hold for multicurves. 

For a hyperbolic surface $X$, possibly with geodesic boundary, we set $q=|\chi(X)|$ and 
\[
h_X(\varepsilon) := \limsup_{L \to \infty}\frac{ \log\left(N_X(L,\varepsilon L^2)\right)}{L}.
\]
The previous bounds immediately imply the following results. 
\begin{corollary}\label{cor:epsilon}
When $X$ has geodesic boundary, we have \[
h_X(\varepsilon)\leq 2\pi\sqrt{q \varepsilon}.
\]
In particular, 
\[
\limsup_{\varepsilon \to 0^+} \frac{h_X(\varepsilon)}{\sqrt{\varepsilon}} \leq 2\pi\sqrt{q}.
\]
For closed $X$, we have 
When $X$ has geodesic boundary, we have \[
h_X(\varepsilon)\leq 2\pi\sqrt{(q+1) \varepsilon}
\]
and
\[
\limsup_{\varepsilon \to 0^+} \frac{h_X(\varepsilon)}{\sqrt{\varepsilon}} \leq 2\pi\sqrt{q+1}.
\]
\end{corollary}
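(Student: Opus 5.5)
The plan is to substitute $K=\varepsilon L^2$ into Theorem \ref{thm:boundary} and Theorem \ref{thm:closed}, take logarithms, divide by $L$, and let $L\to\infty$. The only thing to check is that the polynomial factor contributes nothing to the entropy, so the exponential factor alone controls $h_X(\varepsilon)$.

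For $X$ with geodesic boundary, write $A=\frac{e^{4\pi q/s_0}}{2s_0}$, a constant depending only on $X$. Theorem \ref{thm:boundary} with $K=\varepsilon L^2$ gives
\[
\log N_X(L,\varepsilon L^2)\leq 12q\log(AL+4)+2\pi\sqrt{q\varepsilon L^2}=12q\log(AL+4)+2\pi\sqrt{q\varepsilon}\,L .
\]
The theorem is stated for $K\in\N$. If $\varepsilon L^2$ is not an integer, I will apply the bound with $K=\lfloor\varepsilon L^2\rfloor$. This is legitimate because self-intersection numbers are integers, and the exponential term only decreases. Dividing by $L$, the term $12q\log(AL+4)/L$ tends to $0$. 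Taking $\limsup_{L\to\infty}$ then gives $h_X(\varepsilon)\leq 2\pi\sqrt{q\varepsilon}$. Dividing by $\sqrt\varepsilon$, which is independent of $L$, and taking $\limsup_{\varepsilon\to0^+}$ gives the bound $2\pi\sqrt q$.

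For closed $X$ the argument is identical, now using Theorem \ref{thm:closed}. The coefficient $A'=\frac{5}{(s_0')^2}e^{(16q+4\pi(q+1))/s_0'}$ again depends only on $X$. The exponent $12(q+1)$ of the polynomial factor is fixed, so its logarithm is $O(\log L)$. The exponential factor contributes exactly $2\pi\sqrt{(q+1)\varepsilon}\,L$. This yields $h_X(\varepsilon)\leq2\pi\sqrt{(q+1)\varepsilon}$ and the corresponding $\limsup$ bound $2\pi\sqrt{q+1}$.

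There is no genuine obstacle here. The only point needing care is that all constants in the polynomial factor are independent of $L$ and $\varepsilon$, so that $\log(\text{polynomial})/L\to0$ uniformly in the relevant sense. This holds because they depend only on $\sys(X)$, $q$, and the fixed $r_0$.
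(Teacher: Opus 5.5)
Your proposal is correct and follows the same route as the paper, which simply asserts that the corollary follows immediately from substituting $K=\varepsilon L^2$ into Theorems \ref{thm:boundary} and \ref{thm:closed}. Your added details make explicit what the paper leaves implicit: you replace $\varepsilon L^2$ by $\lfloor \varepsilon L^2\rfloor$ because self-intersection numbers are integers, and you note that the polynomial factor contributes only $O(\log L)$ to $\log N_X(L,\varepsilon L^2)$ since its coefficients depend on $X$ alone.
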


For context, we provide a quick lower bound to the above quantity, which follows directly from Lalley's theorem \cite{Lalley}, and is in the spirit of \cite{AougabSouto}.

\begin{proposition}\label{prop:lower}
For any closed hyperbolic surface $X$ and any $\varepsilon>0$, we have 
\[
h_X(\varepsilon)\geq \min\{1,\pi\sqrt{2q\varepsilon}\}.
\]
In particular, 
\[
\liminf_{\varepsilon \to 0^+} \frac{h_X(\varepsilon)}{\sqrt{\varepsilon}} \geq \pi\sqrt{2q}.
\]
\end{proposition}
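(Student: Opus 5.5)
We will use Lalley's theorem in its standard form. For a closed hyperbolic surface $X$ with $|\chi(X)|=q$, consider primitive closed geodesics of length at most $L$, chosen uniformly at random. Their self-intersection number $k$ satisfies $k/L^2 \to \kappa$ in probability, where
\[
\kappa=\frac{1}{\pi\,\area(X)}=\frac{1}{2\pi^2 q}.
\]
(Lalley normalises so that $i(\gamma,\gamma)/L^2\to 1/(\pi\,\area(X))$ and $\frac12 i(\gamma,\gamma)$ is the self-intersection count; I would check this constant, since the target $\pi\sqrt{2q\varepsilon}$ corresponds exactly to $1/\sqrt{\kappa}=\pi\sqrt{2q}$.) We combine this with Huber's theorem: there are asymptotically $e^L/L$ primitive closed geodesics of length at most $L$.

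The idea is to rescale. Fix $\varepsilon>0$. First suppose $\varepsilon\geq\kappa$, i.e. $\pi\sqrt{2q\varepsilon}\geq 1$. Fix $\eta>0$ and take any $\varepsilon'\in(\kappa,\kappa+\eta)$ with $\varepsilon'\leq\varepsilon$ when possible, or simply use $\varepsilon$ itself if $\varepsilon>\kappa$. By Lalley, for large $L$ at least half of the geodesics of length at most $L$ have at most $\varepsilon L^2$ self-intersections. Each single geodesic is a multicurve, so
\[
N_X(L,\varepsilon L^2)\geq \tfrac12 e^L/L
\]
and hence $h_X(\varepsilon)\geq 1$. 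The boundary case $\varepsilon=\kappa$ follows from monotonicity of $h_X$ in $\varepsilon$ together with the case $\varepsilon<\kappa$ treated next, letting $\varepsilon\to\kappa$; alternatively, one can simply note that the $\min$ is then attained by the other term.

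Now suppose $\varepsilon<\kappa$. Set $L'=tL$ with $t=\sqrt{\varepsilon/\kappa'}<1$, where $\kappa'>\kappa$ is close to $\kappa$. Apply Lalley at length $L'$: for large $L$, at least half of the $\sim e^{L'}/L'$ primitive geodesics of length at most $L'\leq L$ have self-intersection at most
\[
\kappa' L'^2=\varepsilon L^2 .
\]
This gives $N_X(L,\varepsilon L^2)\geq \tfrac12 e^{tL}/(tL)$, so $h_X(\varepsilon)\geq t=\sqrt{\varepsilon/\kappa'}$. Letting $\kappa'\downarrow\kappa$ yields
\[
h_X(\varepsilon)\geq\sqrt{\varepsilon/\kappa}=\pi\sqrt{2q\varepsilon}.
\]

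The $\liminf$ statement follows immediately: for $\varepsilon$ small enough the minimum is $\pi\sqrt{2q\varepsilon}$, so dividing by $\sqrt\varepsilon$ gives a quantity at least $\pi\sqrt{2q}$.

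The main obstacle is the exact form and constant in Lalley's theorem. One must confirm the value $\kappa=1/(2\pi^2 q)$ for the self-intersection count as normalised in this paper (half of $i(\gamma,\gamma)$). One must also confirm that the convergence is in probability with respect to the uniform measure on geodesics of length at most $L$, rather than of length in $[L,L+1]$. If Lalley's statement is on windows $[L,L+\delta]$, the same argument works by counting in the window $[L'-1,L']$, which still carries a positive proportion of $e^{L'}/L'$.
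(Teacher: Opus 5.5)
Your argument is correct and is essentially the paper's proof: Lalley's concentration of $\frac12 i(\gamma,\gamma)$ around $\kappa T^2$ with $\kappa=1/(2\pi^2 q)$, combined with Huber's count at the rescaled length $T=\lambda L$, $\lambda=\min\{1,\sqrt{\varepsilon/\kappa'}\}$, then letting $\kappa'\downarrow\kappa$; your split into $\varepsilon>\kappa$ and $\varepsilon<\kappa$ is just the two branches of this minimum. The boundary case $\varepsilon=\kappa$ needs no separate remark, since your second-case argument applies there unchanged ($\kappa'>\kappa=\varepsilon$ still gives a scaling factor $<1$ that tends to $1$).
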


\begin{proof}
The proof follows by just proving there are enough closed geodesics with the desired intersection property. By Lalley's theorem, most closed geodesics of length at most $T$ have self-intersection approximately $\kappa T^2$ where
\[
\kappa = \frac{1}{\pi \, \area(X)} = \frac{1}{2\pi^2 q}.
\]
In particular, for any $t>0$, the asymptotic growth of curves $G_t(T)$ of length at most $T$ with at most $(t+\kappa) L^2$ intersection points grows like the total number of curves in Huber's theorem \cite{Huber}:
\[
\#G_t(T) = \#\{ \gamma \in \curves(X) \mid \ell_X(\gamma) \leq T \text{ and } \frac{1}{2} i(\gamma,\gamma) \leq (\kappa + t) L^2\} \sim \frac{e^T}{T}
\]
as $T \to \infty$. 

We now look at shorter lengths to let the intersections be close to $\varepsilon$ times the length squared. For that, we fix $t>0$ and $\varepsilon >0$ and we set 
\[
\lambda = \min \left\{ 1 , \sqrt{\frac{\varepsilon}{\kappa + t}}\right\}
\]
and $T= \lambda  L$. Any $\gamma \in G_t(T)$ satisfies $\ell_X(\gamma) \leq T \leq L$ and
\[
\frac{1}{2} i(\gamma,\gamma) \leq (\kappa+t) T^2 \leq \varepsilon L^2. 
\]
Hence, $N_X(L, \varepsilon L^2) \geq \# G_t(T) = e^{\lambda L +o(L)}$. By taking logarithms and letting $L\to \infty$ we get 
\[
h_X(\varepsilon) \geq  \lambda = \min \left\{ 1 , \sqrt{\frac{\varepsilon}{\kappa + t}}\right\}. 
\]
Now letting $t \to 0^{+}$, this gives 
\[
h_X(\varepsilon) \geq \min \left\{ 1 , \sqrt{\frac{\varepsilon}{\kappa}} \right\} = \min  \left\{ 1 , \sqrt{2q \varepsilon} \right\}
\]
as required. 
\end{proof}

These bounds on $h_X(\varepsilon)$ lead to natural questions, namely does $ \lim_{\varepsilon \to 0^+} \frac{h_X(\varepsilon)}{\sqrt{\varepsilon}}$ exist and if so what is it equal to? These questions seem intimately related to the Aougab-Souto conjecture \cite{AougabSouto} on the number of topological types of $K$-curves, one of the original motivations for this paper.

{\it Address:}\\
Department of Mathematics, University of Fribourg, Switzerland\\

{\it Email:}\\
hugo.parlier@unifr.ch\\


\begin{thebibliography}{99}

\bibitem{Andrews}
G.~E.~Andrews, \emph{The Theory of Partitions}, Encyclopedia of Mathematics
and its Applications, vol.~2, Addison--Wesley, 1976.

\bibitem{AougabSouto}
T.~Aougab and J.~Souto,
\emph{Counting curve types},
Amer. J. Math. \textbf{140} (2018), no.~6, 1423--1441.

\bibitem{BasmajianParlier2026}
A.~Basmajian and H.~Parlier,
\emph{The entropy spectrum of hyperbolic surfaces},
arXiv:2608.20143.

\bibitem{BirmanSeries84}
J.~S.~Birman and C.~Series,
\emph{An algorithm for simple curves on surfaces},
J. London Math. Soc. (2) \textbf{29} (1984), no.~2, 331--342.

\bibitem{Birman-Series85}
J. S. Birman and C. Series.
\emph{Geodesics with bounded intersection number on surfaces are sparsely distributed}.
Topology \textbf{24} (1985), no.~2, 217--225.

\bibitem{Bonahon}
F.~Bonahon,
\emph{The geometry of Teichm\"uller space via geodesic currents},
Invent. Math. \textbf{92} (1988), no.~1, 139--162.

\bibitem{BuserBook}
P.~Buser,
\emph{Geometry and spectra of compact Riemann surfaces},
reprint of the 1992 edition,
Modern Birkh\"auser Classics,
Birkh\"auser Boston, Boston, MA, 2010.

\bibitem{CahnFanoniPetri}
P.~Cahn, F.~Fanoni and B.~Petri,
\emph{Mapping class group orbits of curves with self-intersections},
Israel J. Math. \textbf{223} (2018), no.~1, 53--74.

\bibitem{Chas2004}
M.~Chas,
\emph{Combinatorial Lie bialgebras of curves on surfaces},
Topology \textbf{43} (2004), no.~3, 543--568.

\bibitem{ChasLalley}
M.~Chas and S.~P.~Lalley,
\emph{Self-intersections in combinatorial topology: statistical structure},
Invent. Math. \textbf{188} (2012), no.~2, 429--463.

\bibitem{CohenLustig}
M.~Cohen and M.~Lustig,
\emph{Paths of geodesics and geometric intersection numbers. I},
in Combinatorial group theory and topology (Alta, Utah, 1984),
Ann. of Math. Stud. \textbf{111} (1987), 479--500.

\bibitem{DeBlois18}
J.~DeBlois,
The Delaunay tessellation in hyperbolic space,
\emph{Math. Proc. Cambridge Philos. Soc.}
\textbf{164} (2018), no.~1, 15--46.

\bibitem{DGZZ}
V.~Delecroix, \'E.~Goujard, P.~Zograf and A.~Zorich,
\emph{Masur--Veech volumes, frequencies of simple closed geodesics and
intersection numbers of moduli spaces of curves},
Duke Math. J. \textbf{170} (2021), no.~12, 2633--2718.

\bibitem{DespreLazarus19}
V.~Despr\'e and F.~Lazarus,
\emph{Computing the geometric intersection number of curves},
J. ACM \textbf{66} (2019), no.~6, 1--49.

\bibitem{DespreSchlenkerTeillaud24}
V.~Despr\'e, J.-M.~Schlenker and M.~Teillaud,
Flipping geometric triangulations on hyperbolic surfaces,
\emph{J. Comput. Geom.}
\textbf{15} (2024), no.~1, 203--223.

\bibitem{ErlandssonSoutoCounting}
V.~Erlandsson and J.~Souto,
\emph{Counting curves in hyperbolic surfaces},
Geom. Funct. Anal. \textbf{26} (2016), no.~3, 729--777.

\bibitem{ErlandssonSoutoBook}
V.~Erlandsson and J.~Souto,
\emph{Mirzakhani's Curve Counting and Geodesic Currents},
Progress in Mathematics, vol.~345,
Birkh\"auser, 2022.

\bibitem{ErlandssonSoutoSimpleToAll}
V.~Erlandsson and J.~Souto,
\emph{Mirzakhani's curve counting: from simple to all},
preprint, arXiv:1904.05091.


\bibitem{FarbMargalit12}
B.~Farb and D.~Margalit,
\emph{A Primer on Mapping Class Groups},
Princeton Mathematical Series, vol.~49,
Princeton University Press, Princeton, NJ, 2012.

\bibitem{HassScott85}
J.~Hass and P.~Scott,
Intersections of curves on surfaces,
\emph{Israel J. Math.}
\textbf{51} (1985), no.~1--2, 90--120.

\bibitem{Huber}
H.~Huber,
\emph{Zur analytischen Theorie hyperbolischer Raumformen und Bewegungsgruppen},
Math. Ann. \textbf{138} (1959), 1--26.

\bibitem{Lalley}
S.~P.~Lalley,
\emph{Distribution of periodic orbits of symbolic and Axiom A flows},
Adv. in Appl. Math. \textbf{8} (1987), no.~2, 154--193.

\bibitem{LiuRafiSoutoTrin}
M.~Liu, K.~Rafi, J.~Souto and M.~Trin,
\emph{Large genus asymptotics for frequency of non-simple curves},
arXiv:2605.03833.

\bibitem{Mirzakhani2008}
M.~Mirzakhani,
\emph{Growth of the number of simple closed geodesics on hyperbolic surfaces},
Ann. of Math. (2) \textbf{168} (2008), no.~1, 97--125.

\bibitem{MirzakhaniOrbits}
M.~Mirzakhani,
\emph{Counting mapping class group orbits on hyperbolic surfaces},
preprint, arXiv:1601.03342, 2016.

\bibitem{RafiSouto}
K.~Rafi and J.~Souto,
\emph{Geodesic currents and counting problems},
Geom. Funct. Anal. \textbf{29} (2019), no.~3, 871--889.

\bibitem{Sapir2016}
J.~Sapir,
Bounds on the number of non-simple closed geodesics on a surface,
\emph{Int. Math. Res. Not. IMRN} \textbf{2016} (2016), no.~24, 7499--7545.

\bibitem{Sapir2021}
J.~Sapir, A Birman--Series type result for geodesics with infinitely many self-intersections,
\emph{Trans. Amer. Math. Soc.} \textbf{374} (2021), no.~11, 7553--7568.

\bibitem{Torkaman}
T.~Torkaman,
\emph{Entropy and self-intersection number of geodesic currents on compact hyperbolic surfaces},
arXiv:2604.05174.

\bibitem{Yamada82}
A.~Yamada,
\emph{On Marden's universal constant of Fuchsian groups. II},
J. Analyse Math. \textbf{41} (1982), 234--248.

\end{thebibliography}
\end{document}